\documentclass[opre,nonblindrev]{informs4}
\usepackage{eqndefns-left} 
\RequirePackage{tgtermes}
\RequirePackage{newtxtext}
\RequirePackage{newtxmath}
\RequirePackage{bm}
\RequirePackage{endnotes}

\OneAndAHalfSpacedXII 

\usepackage{algorithm}
\usepackage{algpseudocode}
\usepackage{tikz}
\usepackage{amsmath}
\usepackage[normalem]{ulem}
\usepackage[
    font=footnotesize,        
    labelfont=normalfont
]{subcaption}

\usetikzlibrary{positioning, arrows.meta,calc,fit,backgrounds}

\usepackage{natbib}
 \bibpunct[, ]{(}{)}{,}{a}{}{,}%
 \def\bibfont{\small}%

\usepackage{enumitem}
\usepackage{subcaption}  %
\usepackage[hidelinks]{hyperref}
\EquationsNumberedThrough    

\TheoremsNumberedThrough     
\ECRepeatTheorems  %

\MANUSCRIPTNO{}

\usepackage{booktabs}
\newcommand{\bx}{\mathbf{x}}

\newcommand{\bz}{\mathbf{z}}

\newcommand{\bQ}{\mathbf{Q}}

\newcommand{\bb}{\mathbf{b}}
\newcommand{\bA}{\mathbf{A}}
\newcommand{\R}{\mathbb{R}}
\newcommand{\ba}{\mathbf{a}}
\newcommand{\bv}{\mathbf{v}}
\newcommand{\bw}{\mathbf{w}}
\newcommand{\bu}{\mathbf{u}}
\newcommand{\by}{\mathbf{y}}

\newcommand{\beee}{\mathbf{e}}

\newcommand{\tpose}{\top}

\newcommand{\btheta}{\boldsymbol{\theta}}
\newcommand{\bTheta}{\boldsymbol{\Theta}}

\newcommand{\bSigma}{\boldsymbol{\Sigma}}
\newcommand{\bepsilon}{\boldsymbol{\epsilon}}
\newcommand{\Xopt}{\mathcal{X}^{\star}}

\newcommand{\bxopt}{\bx^{\star}}
\newcommand{\bthetastar}{\boldsymbol{\theta}^*}
\newcommand{\boldeta}{\boldsymbol{\eta}}

\begin{document}



\RUNAUTHOR{Chan, Sandholtz, and Yousefi}

\RUNTITLE{Uncertainty Quantification in IO}

\TITLE{Uncertainty Quantification in Data-Driven Inverse Optimization via Bayesian Inference}

\ARTICLEAUTHORS{%

\AUTHOR{Timothy C. Y. Chan}
\AFF{Department of Mechanical and Industrial Engineering, University of Toronto}

\AUTHOR{Nathan Sandholtz}
\AFF{Department of Statistics, Brigham Young University}

\AUTHOR{Nasrin Yousefi}
\AFF{Smith School of Business, Queen's University}
}
\ABSTRACT{%
Inverse optimization (IO) is used to estimate unknown parameters of an optimization model from observed decisions. In the data-driven context, the estimated parameters are inherently uncertain,
yet quantifying this uncertainty has received limited attention in the literature, where existing methods return a point estimate. In this paper, we propose a hierarchical Bayesian framework for parameter uncertainty quantification in data-driven inverse optimization. Considering two data-generating processes, we develop two Markov chain Monte Carlo algorithms to estimate the posterior distribution of the unknown parameter vector, which is used to construct credible regions.
We establish posterior consistency under standard identifiability conditions. Numerical experiments demonstrate near-nominal empirical coverage of the credible regions and show that the regions shrink as the number of observed decisions increases.
}%




\KEYWORDS{inverse optimization, Bayesian inference, credible region, Markov chain Monte Carlo,  hierarchical Bayesian model} 
\maketitle


\section{Introduction}
\label{sec:intro}

Inverse optimization (IO) refers to the problem of estimating parameters of an optimization model using decision data. 
These estimates can provide insight into the decision-making process that generated the data \citep{bertsimas2015data,aswani2018inverse, ajayi2022objective,besbes2025contextual, ren2025inverse} or be used in a downstream ``forward'' optimization model that prescribes  decisions \citep{keshavarz2011imputing,chan2014generalized,ronnqvist2017calibrated,zattoni2025inverse}. 
Existing data-driven inverse optimization approaches generally produce a ``point estimate''. 
However, quantifying the uncertainty in these estimates is critical. For example, this quantification can help decision makers calibrate confidence in their inferences or enable them to develop uncertainty sets for downstream robust optimization \citep{lin2024conformal}. 

In this paper, we develop a novel approach to quantify uncertainty in parameters estimated via data-driven inverse optimization, focusing on convex forward problems. 
We propose a fully generative probabilistic model of the observed decision data, explicitly parameterized by the assumed forward problem. 
We then leverage this probabilistic model to infer the parameters via Bayesian inference, yielding both a point estimate and a \textit{credible region} around that estimate. 

Key to our framework is the explicit consideration of a data model. Accordingly, IO-based parameter estimation should incorporate a data model that is consistent with the decision maker's belief about how the data was generated. This paper considers two natural data-generating models. The first assumes that decision data is generated by random perturbations of an optimal solution to an optimization problem. In other words, data noise originates in the space of \textit{decisions}, for example from measurement error. The second assumes the objective function coefficients have been randomly perturbed and the decision maker observes the decisions associated with solving the optimization problem using the perturbed parameters. In this case, noise originates in the space of the \textit{objective function}, for example arising from imperfect knowledge of the true objective function.

By modeling the data-generating process probabilistically, one can view optimization through the lens of statistical inference. In both processes described above, observed decisions are linked to underlying parameters through intermediate, unobserved quantities (either the optimal solution before it is perturbed or the perturbed parameter vector before the forward model is solved). This induces a hierarchical structure in which optimization is an intermediate mapping between parameters and observations. 
Bayesian methods accommodate optimization-generated intermediate variables by conditioning directly on them rather than requiring closed-form marginalization. 

Our specific contributions are as follows. 

\begin{enumerate}
    \item We propose the first methodology for quantifying uncertainty in inverse optimization using a statistically rigorous approach based on Bayesian inference. Our approach inherits classical convergence properties of Bayesian inference. In particular, we show that under standard assumptions the posterior distribution concentrates around the true parameter vector. While our primary focus is uncertainty quantification, the proposed method also yields a new approach to solving data-driven inverse optimization problems. 
    
    \item We develop two Markov chain Monte Carlo (MCMC)-based algorithms to estimate the unknown parameter vector and construct its credible region. The algorithms are tailored to two data-generating processes, requiring the solution of distinct optimization problems. The first solves forward optimization problems, while the second solves inverse optimization problems. 
    
    \item Through extensive numerical experiments on linear and quadratic forward problems, we validate that our algorithms successfully converge to the true parameter vectors. We show that the credible regions shrink as the number of data points increases and that they are well-calibrated to the specified confidence level. 
\end{enumerate}

\section{Literature Review} 
\label{sec:litreview}

The inverse optimization literature can be divided into  classical and data-driven models \citep{chan2025inverse}. Classical models determine parameters of an optimization problem assuming that observed decisions are exactly optimal (i.e., noise free). The models may be linear \citep{ahuja2001inverse}, nonlinear \citep{chow2014nonlinear}, conic \citep{iyengar2005inverse, zhang2010inverse}, or discrete \citep{schaefer2009inverse, wang2009cutting, bulut2021complexity,bodur2022inverse}. Data-driven models assume observed decisions are noisy and thus may not be optimal for any set of parameters of the chosen model \citep{keshavarz2011imputing,bertsimas2015data,esfahani2018data,aswani2018inverse,chan2019inverse,babier2021ensemble,chan2022inverse,chan2022inverse2,gupta2022decomposition,zattoni2025learning}. Our work falls within the data-driven paradigm. 

Statistical concepts have received some attention in the inverse optimization literature. 
\cite{aswani2018inverse} develop an inverse optimization approach that produces statistically consistent estimates for parameters of a convex forward problem. 
Rather than constructing a single consistent estimator, by assuming a fully probabilistic generative model for the data, we obtain a full posterior distribution and prove that it concentrates around the true parameter vector under analogous identifiability conditions.
\cite{shahmoradi2022quantile} show that existing inverse LP methods can lead to unstable estimates with noisy data. They develop an inverse model using a quantile statistic of optimality errors to improve stability of the estimates. The formulations from both papers are NP-hard. Also, in all papers described so far, the models only return a point estimate. 

\cite{birge2022stochastic} study inverse LP where the cost vector is assumed to be drawn from a parametric distribution and the goal is to estimate parameters of this distribution using the observed decisions. Although this approach considers the distributional nature of the parameters, it does not aim to quantify uncertainty in their estimation, which is the focus of our paper. The conformal inverse optimization approach of \cite{chan2024conformal} generates an uncertainty set over the parameters and uses it in a downstream robust optimization problem. However, their approach does not explicitly model the data-generating process. 
In contrast, our approach assumes a fully generative probabilistic model for the decision data, and then leverages Bayesian inference to generate a posterior distribution over the parameter vector, which can be used to construct credible regions at arbitrary credibility levels.

A few studies have applied Bayesian methods in inverse optimization. \cite{sandholtz2023inverse} develop a probabilistic framework for inverse Bayesian optimization to estimate human acquisition functions based on observed decisions in exploration–exploitation tasks. Their method defines a likelihood over behavior driven by a Bayesian optimization subroutine, enabling uncertainty quantification via credible intervals. However, it relies on discretized acquisition families, lacks theoretical consistency guarantees, and is tailored to a specific behavioral setup.
\cite{lu2025bo4io} propose a Bayesian optimization framework for estimating unknown parameters in inverse optimization, treating the inverse problem as a black-box loss minimization task. Uncertainty quantification is addressed via profile likelihood, a classical frequentist technique for constructing confidence intervals.  
However, their paper does not formulate a fully generative probabilistic model for the inverse problem nor does it establish asymptotic guarantees such as posterior consistency.

\section{Statistical Motivation and Background}
\label{sec:statmotiv}

To motivate our approach, it is instructive to contrast loss-based and likelihood-based regression and to review hierarchical Bayesian models.

\subsection{Loss-based vs.\@ Likelihood-based Regression}

Consider a dataset $\mathcal{D} = \{(\bu^1,y^1), (\bu^2,y^2), \ldots, (\bu^N,y^N)\}$. Each $y^i \in \R$ is a realization of a random variable $Y^i$ generated from a mean function $\mu(\bu^i; \btheta) = \mathbb{E}(Y^i\mid\bu^i)$ plus random error, where $\bu^i$ is a vector of covariates and $\btheta \in \mathbb{R}^h$ is a parameter vector.  The regression problem is to estimate $\btheta$ given $\mathcal{D}$. This is typically done via: 1) loss-based minimization, or 2) likelihood-based maximization.

In loss-based regression, a loss function $\ell$ (e.g., squared error loss) penalizes the difference between observed $y^i$ values and the estimated mean function at the corresponding covariate vector $\bu^i$. The parameter vector $\btheta$ is then computed by minimizing the total loss over the dataset:  
$$\hat{\btheta}_\text{LOSS} \in \argmin_{\btheta} \sum_{i=1}^N \ell\left(y^i - \mu(\bu^i; \btheta) \right).$$
Notably, this approach provides no direct uncertainty quantification of the resulting point estimate. 

In likelihood-based regression, a probabilistic model $f$ for the target data is specified (e.g., $Y_i \sim \mathcal{N}(\mu(\bu^i; \btheta), \sigma^2)$). The likelihood function is defined as the joint density of the observed data: 
$$L(\btheta; \mathcal{D}) \;=\; f\!\left(\{y^i\}_{i=1}^N \,\middle|\, \{\bu^i\}_{i=1}^N,\, \btheta \right).$$
Inference may proceed under a Bayesian or frequentist paradigm. For the Bayesian approach, a prior distribution over $\btheta$, $\pi(\btheta)$, is specified and Bayes’ rule is used to obtain the posterior distribution
\begin{equation}\label{eq:posterior}
    \pi(\btheta \mid \mathcal{D}) \propto L(\btheta; \mathcal{D}) \, \pi(\btheta),
\end{equation}
representing the updated belief about $\btheta$ conditional on the observed data $\mathcal{D}$. Point estimates for $\btheta$ are usually based on posterior summaries 
such as the posterior mean, median, or mode.  Uncertainty can be summarized by a \((1 - \alpha)\)-level credible region, 
$\mathcal{C}_{1 - \alpha}$, which satisfies
\begin{equation*}
    \int_{\mathcal{C}_{1 - \alpha}} \pi(\btheta \mid \mathcal{D}) \, d\btheta \geq 1 - \alpha.
\end{equation*}

Relating these paradigms to data-driven inverse optimization, the IO approaches in the literature are almost exclusively loss-based: model parameters $\btheta$ are chosen to minimize some loss between the observed decisions and optimal decisions without considering the distribution of the decision data. The approach we propose is likelihood-based: we treat observed decisions as realizations from the composition of a probabilistic model and an optimization model, defined in Section \ref{sec:modelbasics}.

\subsection{Hierarchical Bayesian Models}

Hierarchical Bayesian models (HBMs) represent complex data-generating processes by decomposing them into conditional layers. As a generic example, consider a model that generates an observation $\by$ from the parameter vector $\btheta$ via the intermediate (latent) variable $\bz$:
\begin{equation}\label{eq:HBM}
\btheta \sim \pi(\cdot), \qquad
\bz \sim p(\cdot \mid \btheta), \qquad
\by \sim f(\cdot \mid \bz).
\end{equation}
The intermediate $\bz$ need not be directly observed and may encode structural relationships in the data-generating process \citep{gelman2007data}. Under this specification, $\by$ is conditionally independent of $\btheta$ given $\bz$ (i.e., $f(\by \mid \bz, \btheta) = f(\by \mid \bz)$), so that the likelihood depends on $\btheta$ only through the intermediate layer. 
This conditional independence structure is a common and convenient modeling choice in hierarchical Bayesian models, as it simplifies the specification of the joint distribution and posterior computation via MCMC \citep{gelman2013bayesian, gelman2007data}.

Hierarchical representations are useful when the mapping from parameters to observations is indirect. Instead of specifying a marginal likelihood $h(\by \mid \btheta)$ in closed form, one can model it through its conditional components. Bayesian inference proceeds by operating on these components, avoiding the need to compute the (possibly intractable) marginal 
$
h(\by \mid \btheta) := \int f(\by \mid \bz)p(\bz \mid \btheta)\, d\bz.
$

Mapping this back to optimization, observed decisions ($\by$) are linked to model parameters ($\btheta$) through the solution of an optimization problem, naturally inducing a hierarchical structure. 

\section{Inverse Optimization: Loss vs.\@ Likelihood} 
\label{sec:modelbasics}

We consider a data-driven IO model where observed decisions are associated with covariates that affect the feasible region and/or objective function of the forward problem. The goal is to estimate a parameter vector in the objective function given covariate-decision pairs.

\subsection{Loss-based Inverse Optimization}
This section reviews the ``standard'' approach to IO. Let the forward problem be
\begin{equation}\label{eq:fo_uy}
\begin{aligned}
        \mathbf{FO}(\bu,\btheta): \quad \underset{\bx}{\operatorname{ minimize}} & \quad g(\bx; \bu, \btheta)\\
         \text{subject to} &\quad\bx \in \mathcal{X}(\bu),
\end{aligned} 
\end{equation}
where $\bx \in \R^n$ is the decision vector, $\bu \in \mathcal{U}$ is a covariate vector, and $\btheta \in \Theta \subseteq \R^h $ is a nonzero, unknown parameter vector to be estimated via inverse optimization. The feasible region $\mathcal{X}(\bu) \subseteq \R^n$ is assumed to be compact, convex and full-dimensional. We assume $g$ is convex in $\bx$ and linear in $\btheta$. 
Since $g$ is linear in $\btheta$, the optimal solution set is invariant under positive rescaling of $\btheta$. Thus, we assume that $\Theta$ is constrained to the unit hypersphere $\mathcal{S}^{h-1} := \{\btheta \in \R^h \;|\; \|\btheta\|_2=1\}$.  
 
Assume the forward problem is solved $N$ times, each time with a different covariate vector $\bu^i, i = 1, \ldots, N$, and the decision maker observes decision data $\by^i, i = 1, \ldots, N$. Specific data-generating processes are described in Section \ref{sec:likelihoodIO}. 
Thus, the dataset available to the inverse optimizer is $\mathcal{D} = \{(\bu^1,\by^1), (\bu^2,\by^2), \ldots, (\bu^N,\by^N)\}$. 
Given $\mathcal{D}$, the data-driven inverse optimization problem is 
\begin{equation}
\label{eq:io_uy}
\begin{aligned}
        \mathbf{IO}(\mathcal{D}): \quad \underset{\btheta}{\operatorname{ minimize}} & \quad \sum_{i=1}^N \ell\left(\by^i, \Xopt(\bu^i,\boldsymbol{\theta})\right)\\
         \text{subject to} &\quad \btheta \in \Theta,
\end{aligned}
\end{equation} 
where
\begin{equation}
    \Xopt(\bu,\btheta) := \underset{\bx \in \mathcal{X}(\bu)}{\arg\min} \; g(\bx; \bu, \btheta)
\end{equation} 
is the set of optimal solutions to $\mathbf{FO}(\bu,\btheta)$ under $\bu$ and $\btheta$, and $\ell$ quantifies the loss between an observed decision $\by^i$ and the optimal solution set $\Xopt(\bu^i,\btheta)$.\endnote{Throughout the paper, we use the superscript $\star$ to denote optimal solutions of an optimization problem (e.g., $\bx^\star(\bu,\btheta)$), and the superscript $*$ to denote true but unknown parameter values (e.g., $\bthetastar$).} The two most common choices for the loss function are the Euclidean distance between $\by^i$ and the optimal solution set (\textit{decision loss}), 
\begin{equation*}
\ell_d\left(\by^i, \Xopt(\bu^i,\boldsymbol{\theta})\right) := \min_{\bz^i \in \Xopt(\bu^i,\btheta)}\|\by^i - \bz^i\|_2
\end{equation*}
and the difference between the objective value of $\by^i$ and the optimal value (\textit{suboptimality loss}),
\begin{equation*}
\ell_o\left(\by^i, \Xopt(\bu^i,\boldsymbol{\theta})\right) := g(\by^i; \bu^i,\btheta) - 
g(\bz^i; \bu^i,\btheta)
\end{equation*}
 for any $\bz^i \in \Xopt(\bu^i,\btheta)$. 

Solving \eqref{eq:io_uy} yields a point estimate
\begin{equation}
\label{eq:io_uy_solve}
        \hat{\boldsymbol{\theta}}_{\text{LOSS}} \in \underset{\btheta \in \bTheta}{\arg \min} \sum_{i=1}^N \ell\left(\by^i, \Xopt(\bu^i,\boldsymbol{\theta})\right).  
\end{equation} 
This \textit{loss-based} approach does not  consider uncertainty around $\hat{\boldsymbol{\theta}}_{\text{LOSS}}$.  
Also, different loss functions will yield different estimates, 
emphasizing the need to provide uncertainty quantification.

\subsection{Likelihood-based Inverse Optimization}
\label{sec:likelihoodIO}

Here, we introduce a \emph{likelihood-based} approach to IO. 
We statistically model the data-generating process that produces the observed decisions. 
This induces a probability distribution over the observations that we use to construct a set of likely values for $\btheta$.
Next, we describe two data-generating processes (also referred to as error processes) that align with decision loss and suboptimality loss.

\subsubsection{Decision-Space Error.}
\label{sec:decerror}
This error process assumes perturbations are applied directly to optimal solutions of the forward problem, so that the inverse optimizer observes noisy realizations of these solutions. Let $\bxopt(\bu^i,\btheta^*)$ be an optimal solution to $\mathbf{FO}(\bu,\btheta^*)$.  
Conditional on $\bxopt(\bu^i,\btheta^*)$, and for a given probability distribution $f$ on $\mathbb{R}^n$, the decision maker observes
\begin{equation}
\label{eq:dec_error}
\by^i \sim f(\cdot \mid \bxopt(\bu^i,\btheta^*)).
\end{equation}
Figure \ref{fig:graphical_dec_error} shows a graphical model depicting this data-generating process.

\begin{figure}[H]
\begin{subfigure}[t]{0.47\textwidth}
\centering
\begin{tikzpicture}[
  font=\footnotesize,
  latent/.style={draw,circle,minimum size=1cm,fill=white},
  obs/.style={draw,minimum size=1cm,fill=white},
  arrow/.style={-{Latex[length=2mm]},thick},
  panel/.style={fill=gray!30,rounded corners,inner sep=0.3cm,draw=none}  
]

  \node[latent] (theta) {$\bthetastar$};


  \node[latent,right=3cm of theta, yshift=1cm] (x1) {$\bx^1$};
  \node[latent,right=3cm of theta, yshift=-1cm] (xN) {$\bx^N$};

  \node[obs,right=1cm of x1] (y1) {$\by^1$};
  \node[obs,right=1cm of xN] (yN) {$\by^N$};

  \begin{pgfonlayer}{background}

    \node[
      panel,
      fill=none,
      draw=gray!40,
      thick,
      fit=(x1)(xN)(y1)(yN)
    ] (decspanel) {};
  \node[
    panel,
    inner ysep=1.27cm,  
    fit=(theta)
  ] (objpanel) {};
  \end{pgfonlayer}

  \node[font=\scriptsize, anchor=north] at ($(objpanel.north)+(0,0.6cm)$) {Objective Space};
  \node[font=\scriptsize, anchor=north] at ($(decspanel.north)+(0,0.6cm)$) {Decision Space};

  \node at ($(x1)!0.45!(xN)$)  {$\vdots$};
  \node at ($(y1)!0.45!(yN)$)  {$\vdots$};


  \draw[arrow] (theta) --
  node[midway, above=5pt]
  {$\mathbf{FO}(\bu^1,\btheta^*)$}
  (x1);
  \draw[arrow] (theta) --  
  node[midway, below=5pt]
  {$\mathbf{FO}(\bu^N,\btheta^*)$}
  (xN);

  \draw[arrow] (x1) -- node[midway, above=1pt] {$f$} (y1);
  \draw[arrow] (xN) -- node[midway, above=1pt] {$f$} (yN);

\end{tikzpicture}
\caption{Decision-space error}
\label{fig:graphical_dec_error}
\end{subfigure}
\hfill
\rule{0.4pt}{0.25\textwidth}
\hfill
\begin{subfigure}[t]{0.47\textwidth}
\centering
\begin{tikzpicture}[
  font=\footnotesize,
  latent/.style={draw,circle,minimum size=1cm,fill=white},
  obs/.style={draw,minimum size=1cm,fill=white},
  arrow/.style={-{Latex[length=2mm]},thick},
  panel/.style={fill=gray!30,rounded corners,inner sep=0.3cm,draw=none}  
]

  \node[latent] (theta) {$\bthetastar$};

  \node[latent,right=1cm of theta, yshift=1cm] (tt1) {$\tilde{\btheta}^1$};
  \node[latent,right=1cm of theta, yshift=-1cm] (ttN) {$\tilde{\btheta}^N$};

  \node[obs,right=3cm of tt1] (y1) {$\by^1$};
  \node[obs,right=3cm of ttN] (yN) {$\by^N$};


  \begin{pgfonlayer}{background}
    \node[panel,fit=(theta)(tt1)(ttN)] (objpanel) {};

    \node[
      panel,
      fill=none,
      draw=gray!40,
      thick,
      fit=(y1)(yN)
    ] (decspanel) {};
  \end{pgfonlayer}

  \node[font=\scriptsize, anchor=north] at ($(objpanel.north)+(0,0.6cm)$) {Objective Space};
  \node[font=\scriptsize, anchor=north] at ($(decspanel.north)+(0,0.6cm)$) {Decision Space};

  \node at ($(tt1)!0.45!(ttN)$) {$\vdots$};
  \node at ($(y1)!0.45!(yN)$)  {$\vdots$};

  \draw[arrow] (theta) -- node[midway, above=2pt] {$p$}(tt1);
  \draw[arrow] (theta) -- node[midway, above=2pt]{$p$} (ttN);

  \draw[arrow] (tt1) -- node[midway, above=1pt] 
  {$\mathbf{FO}(\bu^1,\tilde{\btheta}^1)$}
  (y1);
   \draw[arrow] (ttN) -- node[midway, above=1pt] 
  {$\mathbf{FO}(\bu^N,\tilde{\btheta}^N)$}
  (yN);


\end{tikzpicture}
  \caption{Objective-space error}  \label{fig:graphical_obj_error}
\end{subfigure}
\caption{Graphical models illustrating the data-generating process under decision-space (a) and objective-space error (b).}

\label{fig:graphical_model}
\end{figure}

\begin{example}\label{example:decspace}
Figure~\ref{fig:dec_tikz_and_data} illustrates two-dimensional minimization problems with a linear objective, under both linear constraints and quadratic constraints. Three instances are depicted in each panel, each corresponding to a different $\bu^i$. Instance $i$ has a unique optimal solution $\bx^i := \bxopt(\bu^i,\bthetastar)$. The decision maker observes $\by^i$, where $\by^i$ equals $\bx^i$ plus Gaussian noise. Note that the observations can be feasible or infeasible solutions.

\end{example}

\begin{figure}[!htbp]
  \centering

  \begin{subfigure}{0.32\linewidth}
    \centering
    \includegraphics[width=\linewidth,
        trim=0in 0in 0in .07in,clip]{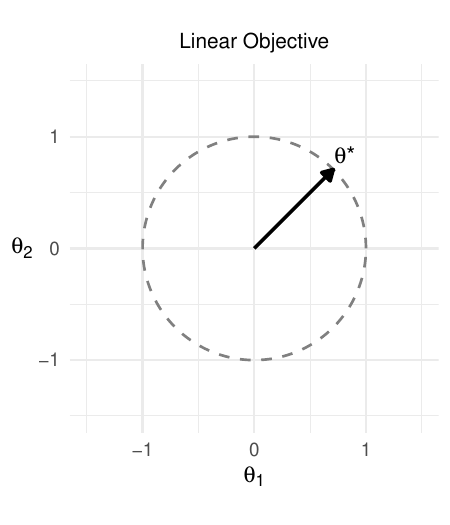}
    \caption{Objective space.}
  \end{subfigure}
\hspace{1pt}
{\color{gray}\raisebox{-0.085\linewidth}{\rule{1pt}{0.3\linewidth}}}
\hspace{1pt}  
\begin{subfigure}{0.64\linewidth}
    \centering
    \includegraphics[width=\linewidth,
        trim=.1in .05in .02in .02in,clip]{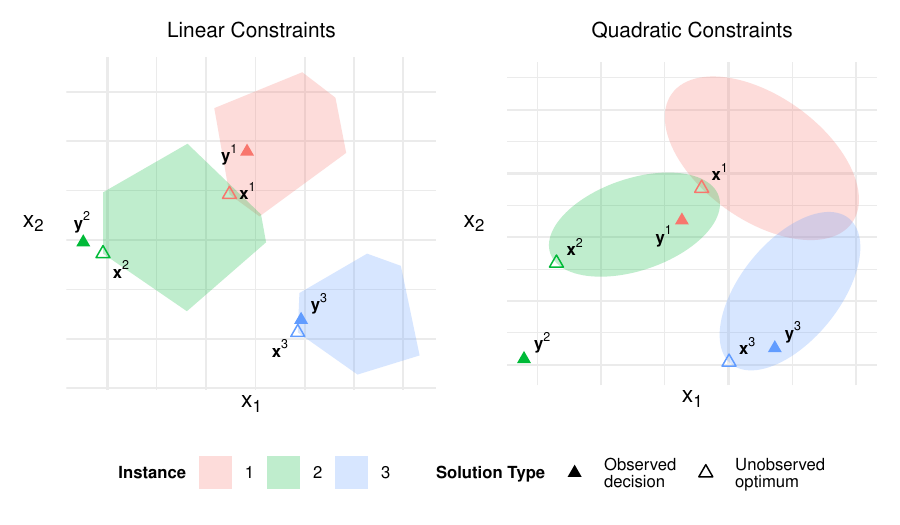}
    \caption{Decision space.}
  \end{subfigure}
\vspace{10pt}
 \caption{(a) Unobserved parameter vector $\btheta^*$. (b) Unobserved optimal solutions and their observed perturbations.}
 
  \label{fig:dec_tikz_and_data}
\end{figure}

\subsubsection{Objective-Space Error.}
\label{sec:objerror}

This error process assumes that the parameter vector $\bthetastar$ is perturbed \textit{before} the forward problem is solved. The resulting optimal solution is what the inverse optimizer observes. Let $p$ be a probability distribution over the unit hypersphere $\mathcal S^{h-1}$. Then, 
\begin{equation}
\label{eq:objective_error_data}
\by^i = \bxopt(\bu^i,\tilde{\btheta}^i) \in
\Xopt(\bu^i,\tilde{\btheta}^i),
\end{equation}
where 
$\tilde{\btheta}^i \sim p(\cdot \mid \bthetastar)$.
Figure \ref{fig:graphical_obj_error} shows a graphical model depicting this data-generating process.

\begin{example}\label{example:objspace}
The setup is the same as in Example \ref{example:decspace}. 
The left panel in Figure~\ref{fig:obj_tikz_and_data} illustrates the perturbed parameter vectors $\tilde{\btheta}^1$, $\tilde{\btheta}^2$, $\tilde{\btheta}^3$, which are generated from a von Mises-Fisher distribution (see~\ref{ec:vmf}) centered at $\bthetastar$. The right panel shows the optimal solutions corresponding to $\tilde{\btheta}^i$ for $i = 1, 2, 3.$ 
In contrast to Example \ref{example:decspace}, all observations lie on the boundary of the feasible region.
\end{example}

In both error processes, when $\Xopt (\bu, \btheta)$ is not a singleton, let $\bx^\star(\bu, \btheta)$ be chosen from this set under some rule independent of $\btheta^*$; we discuss implications of non-uniqueness in Section~\ref{sec:identifiability}.

\begin{figure}[!htbp]
   \centering

  \begin{subfigure}{0.32\linewidth}
    \centering
    \includegraphics[width=\linewidth,
        trim=0in 0in 0in .07in,clip]{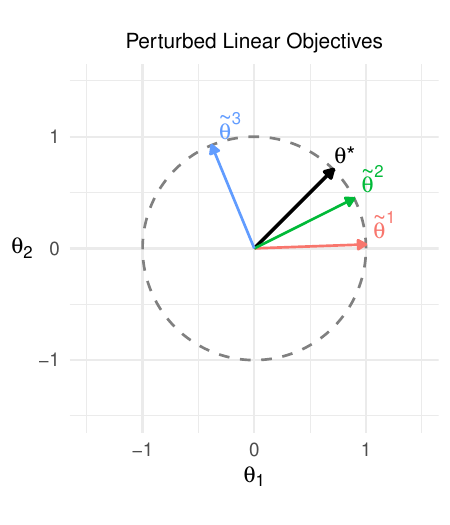}
    \caption{Objective space.}
  \end{subfigure}
\hspace{1pt}
{\color{gray}\raisebox{-0.085\linewidth}{\rule{1pt}{0.3\linewidth}}}
\hspace{1pt}  
\begin{subfigure}{0.64\linewidth}
    \centering
    \includegraphics[width=\linewidth,
        trim=.1in .05in .02in .02in,clip]{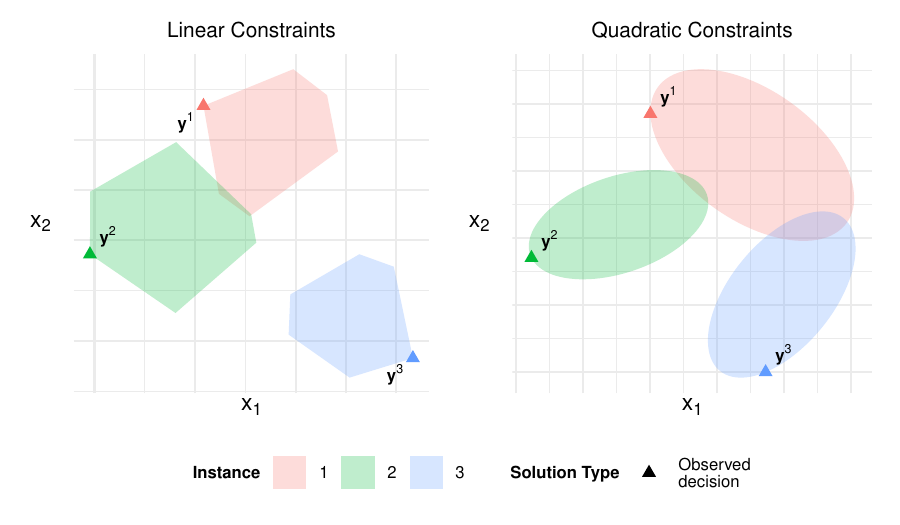}
    \caption{Decision space.}
  \end{subfigure}
\vspace{10pt}
 \caption{(a) Unobserved parameter vector $\btheta^*$ and its perturbations. (b) Observed optimal solutions due to perturbed cost vectors.}
  \label{fig:obj_tikz_and_data}
\end{figure}

\section{Inverse Optimization via Bayesian Inference}
\label{sec:bayesianIO}

In this section, we develop a new framework for data-driven IO using hierarchical Bayesian methods, inspired by the likelihood-based regression approach outlined in Section~\ref{sec:statmotiv}. 

\subsection{Inverse Optimization as a Hierarchical Bayesian  Model}
\label{sec:hier_model}

Both data-generating processes from Section \ref{sec:likelihoodIO} can be viewed as a hierarchical Bayesian model, with the structure of its intermediate layer dependent on the error model. In decision-space error, the intermediate layer is the latent optimal solution of the forward problem. In objective-space error, it is the randomly perturbed parameter vector. 
These hierarchies are shown in Figure~\ref{fig:graphical_model}.

\subsection{Prior Specification}
Given the likelihood specifications introduced in Section 4, Bayesian inference proceeds by placing a prior distribution on the parameter vector $\btheta$ and combining it with the hierarchical structure above.
Given the assumed scale-invariance property, we restrict the support of the prior distribution $\pi(\cdot)$  to the unit hypersphere $\mathcal{S}^{h-1} := \{ \btheta \in \mathbb{R}^h \mid \|\btheta\|_2 = 1 \}$. The uniform distribution over $\mathcal{S}^{h-1}$ is the natural choice for a non-informative prior.  If prior information about the direction of $\btheta^*$ is available, a more informative prior may be used, such as the von Mises–Fisher (vMF) distribution.

\subsection{Posterior Approximation}
\label{sec:mcmc}

The posterior distribution is not available in closed form due to the likelihood specification. Thus, we approximate it using samples from a Metropolis–Hastings algorithm \citep{metropolis1953equation, hastings1970monte}. The implementation depends on the assumed data-generating process. 

\subsubsection{Decision-Space Error.}

Under decision-space error, the likelihood $f(\by^i \mid \bxopt(\bu^i, \btheta^*))$ depends on $\btheta^*$ through the solution to the forward problem $\mathbf{FO}(\bu^i, \btheta^*)$. Each iteration therefore requires solving $\mathbf{FO}(\bu^i, \btheta^*)$ to evaluate the likelihood at the proposed parameter vector.

Algorithm \ref{algo:decision} describes the approach. It begins by drawing $\btheta^0$ from a prior distribution $\pi(\cdot)$ (Step 2). Then it solves the forward problem $\mathbf{FO}(\bu^i,\btheta^0)$ for each $\bu^i$ to obtain optimal solutions $\bx_1^0, \ldots, \bx_N^0$ (Step 3), and computes the corresponding log likelihood (Step 4). In iteration $t$, a candidate $\btheta^c$ is generated from a \textit{proposal distribution} $q(\cdot \mid \btheta^{t-1})$ with support on $\mathcal{S}^{h-1}$. The forward problem is solved with $\btheta^c$ for each instance (Step 7) and the log likelihood evaluated (Step 8). The candidate is accepted with probability $\alpha$; otherwise the Markov chain remains at $\btheta^{t-1}$ (Steps 9--13). 
Assuming that both $q(\cdot \mid \btheta^{t-1})$ and $\pi(\btheta)$ are positive on all of $\mathcal{S}^{h-1}$, the resulting Markov chain is irreducible and targets the posterior distribution $\pi(\btheta \mid \mathcal{D})$ under standard Metropolis--Hastings conditions \citep{hastings1970monte}.
When the prior $\pi(\btheta)$ is uniform and the proposal $q$ is symmetric, the prior and proposal ratios in the acceptance probability equal 1. 

To assess convergence to the stationary distribution, we run several chains via Algorithm~\ref{algo:decision} and use the potential scale reduction factor (PSRF) \citep{gelman1992inference} to choose a sufficiently long warm-up period. We discard the initial $T_{\text{warm}}$ iterations and use remaining samples for inference.

In practice, other parameters governing the likelihood $f$ (e.g., the covariance matrix in a Gaussian likelihood) are generally unknown and must be estimated alongside $\btheta$. This is straightforward to do by treating these parameters as additional unknowns and sampling them jointly with $\btheta$ in the same loop (Steps 5--13). We omit this from Algorithm~\ref{algo:decision} for brevity, as the extension is standard.  
Implementation details are given for the specific numerical experiments we perform in Section \ref{sec:numericals}.

\begin{algorithm}[htbp]
\caption{Metropolis--Hastings Sampling for Decision-Space Error}
\label{algo:decision}

\begin{algorithmic}[1]
\State \textbf{Input:} dataset $\mathcal{D} = \{(\bu^1,\by^1), \ldots, (\bu^N,\by^N)\}$, forward problem $\mathbf{FO}(\bu,\btheta)$, prior distribution $\pi(\cdot)$, likelihood $f(\by \mid \bxopt)$,
proposal distribution $q(\cdot \mid \btheta)$,
warm-up period $T_{\text{warm}}$, post warm-up iterations $T$
\State Draw $\btheta^0 \sim \pi(\cdot)$
\State Solve $\mathbf{FO}(\bu^i,\btheta^0)$ to obtain $\bxopt(\bu^i,\btheta^0)$ for all $i = 1, \ldots, N$
\State Compute log-likelihood $L_f(\btheta^0;\mathcal{D}):=\sum_{i=1}^N \log f\!\left(\by^i \mid \bxopt(\bu^i,\btheta^0)\right)$
\For{$t = 1,\ldots, T_\text{warm}+T$}
    \Statex \textbf{Draw candidate:} 
    \State Draw $\btheta^c \sim q(\cdot \mid \btheta^{t-1})$
    \State Solve $\mathbf{FO}(\bu^i,\btheta^c)$ to obtain $\bxopt(\bu^i,\btheta^c)$ for all $i = 1, \ldots, N$
\State Compute log-likelihood $L_f(\btheta^c;\mathcal{D}):=\sum_{i=1}^N \log f\!\left(\by^i \mid \bxopt(\bu^i,\btheta^c)\right)$  
    \Statex \textbf{Accept or reject:}
\State Compute acceptance probability 
$\alpha \gets \min\!\left\{1,\;\exp\!\left(L_f(\btheta^c;\mathcal D)-L_f(\btheta^{t-1};\mathcal D)\right) \cdot \dfrac{\pi(\btheta^c)\, q(\btheta^{t-1} \mid \btheta^c)}{\pi(\btheta^{t-1})\, q(\btheta^c \mid \btheta^{t-1})}\right\}$
\State Draw $r \sim \text{Uniform}(0,1)$
\State $\btheta^t \gets 
\begin{cases}
\btheta^c & \text{if } r \le \alpha, \\
\btheta^{t-1} & \text{otherwise.}
\end{cases}$
\EndFor
\State \Return $\{\btheta^{T_{\text{warm}} + 1},\ldots,\btheta^{T_\text{warm}+T}\}$
\end{algorithmic}
\end{algorithm}

\subsubsection{Objective-Space Error.}

In contrast to the decision-space error model, in the objective-space error model the perturbation occurs \emph{before} the forward optimization model is solved, and the observed decisions are exact solutions to $\mathbf{FO}(\bu^i, \tilde{\btheta}^i)$. This structure suggests two-step approach: first recover the perturbed parameter vectors $\tilde{\btheta}^1, \ldots, \tilde{\btheta}^N$ from the observed decisions via inverse optimization, and then use $p(\tilde{\btheta}^i \mid \cdot)$ as the basis for inference.

Algorithm~\ref{algo:objective} implements this approach. 
In Step 2, an inverse optimization problem is solved for each observation $(\bu^i, \by^i)$: 
\begin{equation}    
\label{eq:inverse_prob_alg2}
    \tilde{\btheta}^{i} \in 
\underset{\btheta \in \Theta}{\arg\min} \; \mathbf{IO}(\bu^i,\by^i),
\quad i = 1,\ldots,N.
\end{equation}
Formulation \eqref{eq:inverse_prob_alg2} specializes \eqref{eq:io_uy} to the case of a single observed decision. 
In the case of objective-space error, $\by^i \in \mathcal{X}^\star(\bu^i, \tilde{\btheta}^i)$ for some $\tilde{\btheta}^i \in \Theta$, so the optimal value of \eqref{eq:inverse_prob_alg2} is zero for any choice of loss function $\ell$. When the minimizer is unique, $\ell$ does not affect $\tilde{\btheta}^i$. When it is not, different choices of $\ell$ may select different solutions, which we discuss further below.
The resulting $\tilde{\bTheta} = \{\tilde{\btheta}^1, \ldots, \tilde{\btheta}^N\}$ is fixed throughout the subsequent MCMC steps. In Steps~4 and~7, the perturbation distribution $p(\tilde{\btheta}^i \mid \btheta)$ replaces the likelihood $f(\by^i \mid \bxopt(\bu^i, \btheta))$ as the basis for the evaluations. The remainder of the algorithm is essentially unchanged, including determining the warm-up period using PSRF.

Figure~\ref{fig:posterior} shows histograms of 10{,}000 MCMC samples (post-warm-up) of the angular coordinate of $\btheta$ drawn from Algorithms~\ref{algo:decision} and~\ref{algo:objective} applied to the data in Examples~1 and 2, respectively.  

\begin{algorithm}[htbp]
\caption{Metropolis--Hastings Sampling for Objective-Space Error}
\label{algo:objective}
\begin{algorithmic}[1]
\State \textbf{Input:} dataset $\mathcal{D} = \{(\bu^1,\by^1), \ldots, (\bu^N,\by^N)\}$, prior distribution $\pi(\cdot)$, perturbation distribution $p(\tilde{\btheta} \mid \btheta)$, 
proposal distribution $q(\cdot \mid \btheta)$,
warm-up period $T_{\text{warm}}$, post warm-up iterations $T$ 
\State Solve $\mathbf{IO}(\bu^i,\by^i)$ for all $i = 1,\ldots,N$ to obtain $\tilde{\bTheta}=\{\tilde{\btheta}^1,\ldots,\tilde{\btheta}^N\}$
\State Draw $\btheta^0 \sim \pi(\cdot)$
\State Compute $L_p(\btheta^0; \tilde{\bTheta}) := \sum_{i=1}^N \log p(\tilde{\btheta}^i \mid \btheta^0)$
\For{$t=1,\ldots,T_\text{warm}+T$}
    \Statex \textbf{Draw candidate:}
    \State Draw $\btheta^c \sim q(\cdot \mid \btheta^{t-1})$
    \State Compute $L_p(\btheta^c; \tilde{\bTheta}) := \sum_{i=1}^N \log p(\tilde{\btheta}^i \mid \btheta^c)$
    \Statex \textbf{Accept or reject:}
    \State Compute acceptance probability $\alpha \gets \min\!\left\{1,\; \exp\!\left(L_p(\btheta^c;\tilde{\bTheta}) - L_p(\btheta^{t-1};\tilde{\bTheta})\right) \cdot \dfrac{\pi(\btheta^c)\, q(\btheta^{t-1} \mid \btheta^c)}{\pi(\btheta^{t-1})\, q(\btheta^c \mid \btheta^{t-1})}\right\}$
    \State Draw $r \sim \text{Uniform}(0,1)$
\State $\btheta^t \gets 
\begin{cases}
\btheta^c & \text{if } r \le \alpha, \\
\btheta^{t-1} & \text{otherwise.}
\end{cases}$
\EndFor
\State \Return $\{\btheta^{T_{\text{warm}} + 1},\ldots,\btheta^{T_\text{warm}+T}\}$
\end{algorithmic}
\end{algorithm}

\begin{remark} [Non-uniqueness of \eqref{eq:inverse_prob_alg2}]
When an optimal solution $\tilde{\btheta}^i$ to \eqref{eq:inverse_prob_alg2} is not unique, it lies within a set of observationally equivalent parameters, and the specific representative selected depends on the choice of loss function $\ell$. In this case, Algorithm~\ref{algo:objective} targets a plug-in approximation $\pi(\btheta \mid \{\tilde{\btheta}^i\}_{i=1}^N)$, based on a representative parameter vector recovered from each observation, rather than the full posterior $\pi(\btheta \mid \mathcal{D})$. However, the impact of this distinction diminishes asymptotically as new information accumulates through covariate variation, which we discuss in detail in Section~\ref{sec:identifiability}.
\end{remark}

\begin{remark}[Algorithm comparison]
Regarding optimization, Algorithm \ref{algo:objective} has two key differences from Algorithm~\ref{algo:decision}. First, no optimization problems are solved inside the MCMC loop (Steps 5--11). Second, solving the inverse problem in Step~2 requires identifying a parameter vector that rationalizes a single observation, which can be done in closed form when the objective is linear in $\btheta$ and the observation is on the boundary of the feasible region. 
Importantly, these differences exist because the algorithms are specialized to two different data-generating processes. The choice of algorithm should be guided by the observed data and the hypothesized data-generating mechanism. 
\end{remark}

\subsection{Identifiability and Consistency}
\label{sec:identifiability}

Here we characterize the conditions under which the parameter vector $\btheta^*$ is identifiable from the observed data and the posterior distribution is consistent as $N \to \infty$.

\begin{definition}[Identifiability]
The parameter $\btheta^* \in \Theta$ is said to be \emph{identifiable} if
\begin{equation*}
h(\cdot \mid \btheta) = h(\cdot \mid \btheta^*)
\Longrightarrow
\btheta = \btheta^*,
\end{equation*}
where $h(\by \mid \btheta)$ is the data-generating distribution of the observations $\by$ under parameter $\btheta$.
\end{definition}

In standard parametric models, identifiability is determined by the likelihood, and posterior consistency follows under mild regularity conditions on the model and prior \citep{van2000asymptotic}. In inverse optimization, the mapping from $\btheta$ to observed data is mediated by an optimization problem, and whether distinct parameters induce distinct data-generating distributions depends on the structure of the forward problem; e.g., when the mapping $\btheta \mapsto \bx^\star(\bu, \btheta)$ is one-to-one, classical results apply directly.

For each $\bu \in \mathcal{U}$, define the \emph{instance-specific equivalence class}
\begin{equation*}
\Theta^*(\bu) := \left\{ \btheta \in \Theta \;\middle|\; \bx^\star(\bu,\btheta) = \bx^\star(\bu,\btheta^*) \right\},   
\end{equation*}
which contains all parameter vectors that generate the same optimal solution as $\btheta^*$ for covariate $\bu$. When $\Theta^*(\bu)$ is not a singleton, the covariate $\bu$ alone cannot distinguish $\btheta^*$ from other elements of $\Theta^*(\bu)$, regardless of the number of observations at that covariate value. 

Across multiple observations with distinct covariates, however, identifying information on $\btheta^*$  accumulates when the covariates induce different optimal solutions for different parameter vectors.  Given a dataset $\mathcal{D} = \{(\bu^i, \by^i)\}_{i=1}^N$, define the \emph{globally identified set}
\[
\Theta^*_N := \bigcap_{i=1}^N \Theta^*(\bu^i).
\]
Even if each $\Theta^*(\bu^i)$ is large, $\Theta^*_N$ may shrink rapidly as $N$ increases. 
Lemma \ref{lem:set_collapse} below formalizes conditions under which the set $\Theta^*_N$ converges to the singleton $\{\btheta^*\}$. 



\begin{lemma}[Identification]
\label{lem:set_collapse}
Let $\bu^i \sim \mathbb{P}_{\mathcal{U}}$ be i.i.d.\ and $\Theta$ be compact. Assume that for every $\btheta \neq \btheta^*$, there exists an open neighborhood $\mathcal{O}_{\btheta}$ of $\btheta$ and a measurable set $\mathcal{B}_{\btheta} \subseteq \mathcal{U}$ with $\mathbb{P}_{\mathcal{U}}(\mathcal{B}_{\btheta}) > 0$ such that $\bx^\star(\bu, \btheta') \neq \bx^\star(\bu, \btheta^*)$ for all $\btheta' \in \mathcal{O}_{\btheta}$ and $\bu \in \mathcal{B}_{\btheta}$. 
Then $\Theta_N^* \to \{\btheta^*\}$ almost surely as $N \to \infty$.
\end{lemma}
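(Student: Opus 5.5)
The plan is a compactness argument combined with the second Borel--Cantelli lemma. First fix what ``$\Theta_N^* \to \{\btheta^*\}$'' means. The sets are nested, $\Theta^*_{N+1} \subseteq \Theta^*_N$, and each contains $\btheta^*$. So the claim is equivalent to the following: almost surely, for every $\epsilon>0$ there is $N_0$ such that $\Theta^*_N \subseteq B_\epsilon(\btheta^*)$ for all $N \ge N_0$. Equivalently, $\sup_{\btheta\in\Theta_N^*}\|\btheta-\btheta^*\|_2 \to 0$, which is Hausdorff convergence to the singleton. By monotonicity, it suffices to show that for each $\epsilon$ there is almost surely some finite $N$ with $\Theta^*_N \subseteq B_\epsilon(\btheta^*)$. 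Taking a countable sequence $\epsilon_k = 1/k$ and intersecting countably many probability-one events then gives the result for all $\epsilon$ simultaneously.

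Fix $\epsilon>0$ and let $K_\epsilon := \Theta \setminus B_\epsilon(\btheta^*)$. This is a closed subset of the compact set $\Theta$, hence compact, and it excludes $\btheta^*$. For each $\btheta\in K_\epsilon$ the hypothesis supplies an open neighborhood $\mathcal{O}_\btheta$ and a set $\mathcal{B}_\btheta$ with $\mathbb{P}_{\mathcal{U}}(\mathcal{B}_\btheta)>0$. The sets $\{\mathcal{O}_\btheta\}_{\btheta\in K_\epsilon}$ cover $K_\epsilon$, so by compactness there is a finite subcover $\mathcal{O}_{\btheta_1},\dots,\mathcal{O}_{\btheta_m}$. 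This is the key step, since it replaces an uncountable family of ``bad'' parameters by finitely many events of positive probability.

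For each $j$, let $E_j$ be the event that $\bu^i\in\mathcal{B}_{\btheta_j}$ for some $i\le N$. Since the $\bu^i$ are i.i.d., $\mathbb{P}(E_j^c)=(1-\mathbb{P}_{\mathcal{U}}(\mathcal{B}_{\btheta_j}))^N\to 0$. Moreover, almost surely some index $i_j$ has $\bu^{i_j}\in\mathcal{B}_{\btheta_j}$, by the second Borel--Cantelli lemma, or directly because $\mathbb{P}(\text{never})=\lim_N(1-p_j)^N=0$. With finitely many $j$, almost surely all such indices exist. Let $N_0:=\max_j i_j$. For $N\ge N_0$, take any $\btheta'\in K_\epsilon$. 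It lies in some $\mathcal{O}_{\btheta_j}$, and $\bu^{i_j}\in\mathcal{B}_{\btheta_j}$, so $\bx^\star(\bu^{i_j},\btheta')\neq\bx^\star(\bu^{i_j},\btheta^*)$. Hence $\btheta'\notin\Theta^*(\bu^{i_j})\supseteq\Theta^*_N$. This shows $\Theta^*_N\subseteq B_\epsilon(\btheta^*)$, which completes the argument.

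The main obstacle is conceptual rather than technical. One must choose the right mode of set convergence and notice that the pointwise hypothesis alone is insufficient: for each fixed $\btheta$ it eventually excludes $\btheta$, but without uniformity it would not exclude all of $K_\epsilon$ at a common finite $N$. That is why the open neighborhoods $\mathcal{O}_\btheta$ and the compactness of $\Theta$ are both needed. A minor point to check is that $\bx^\star$ is a fixed selection rule, as stated in the paper, so that $\Theta^*(\bu)$ is well defined. Measurability of the events involved only concerns the $\bu^i$ hitting the fixed measurable sets $\mathcal{B}_{\btheta_j}$, so no measurability issues arise for $\Theta^*_N$ itself.
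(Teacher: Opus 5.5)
Your proposal is correct and follows essentially the same route as the paper. Both arguments take $\Theta$ minus a shrinking neighborhood of $\btheta^*$, extract a finite subcover $\mathcal{O}_{\btheta_1},\dots,\mathcal{O}_{\btheta_m}$ by compactness, use Borel--Cantelli to obtain indices $i_j$ with $\bu^{i_j}\in\mathcal{B}_{\btheta_j}$, use that $\Theta_N^*$ is nested to conclude for all $N\ge\max_j i_j$, and finish with a countable intersection over $k$. The differences are only cosmetic: you use balls $B_{1/k}(\btheta^*)$ where the paper uses an abstract decreasing sequence $\mathcal{V}_k$, and you state the mode of set convergence explicitly.
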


\begin{proof} {Proof.}
Let $(\mathcal{V}_k)_{k=1}^\infty$ be a decreasing sequence (i.e., $\mathcal{V}_{k+1} \subseteq \mathcal{V}_k$) of open neighborhoods of $\btheta^*$ such that $\bigcap_{k=1}^\infty \mathcal{V}_k = \{\btheta^*\}$. Fix $k$. Since $\Theta$ is compact and $\mathcal{V}_k$ is open, the set $\Theta \setminus \mathcal{V}_k$ is compact. The collection $\{\mathcal{O}_{\btheta} \mid \btheta \in \Theta \setminus \mathcal{V}_k\}$ is an open cover of $\Theta \setminus \mathcal{V}_k$, so by compactness there exists a finite subcover $\{\mathcal{O}_{\btheta_1}, \ldots, \mathcal{O}_{\btheta_m}\}$ with corresponding covariate sets $\mathcal{B}_{\btheta_1}, \ldots, \mathcal{B}_{\btheta_m}$.

For each $j \in \{1, \ldots, m\}$, since $\mathbb{P}_{\mathcal{U}}(\mathcal{B}_{\btheta_j}) > 0$ and the $\bu^i$ are i.i.d., the Borel-Cantelli lemma implies that, almost surely, there is some $i_j$ such that $\bu^{i_j} \in \mathcal{B}_{\btheta_j}$. For this $\bu^{i_j}$, there is no $\btheta' \in \mathcal{O}_{\btheta_j}$ that lies in $\Theta^*(\bu^{i_j})$. Since $\Theta_{N+1}^* \subseteq \Theta_N^*$, it follows that $\Theta_{N}^* \cap \mathcal{O}_{\btheta_j} = \emptyset$ for all $N \ge i_j$.

Therefore, for all $N \ge \max_{1\le j\le m} i_j$,  $\Theta_{N}^* \subseteq \Theta \setminus \cup_{j=1}^m\mathcal{O}_{\btheta_j} \subseteq \mathcal{V}_k.$ Thus, for this fixed $k$, $\Theta_{N}^* \subseteq \mathcal{V}_k$ almost surely for all sufficiently large $N$. 
Taking a countable intersection over $k$, we obtain that almost surely, for every $k$, $\Theta_N^* \subseteq \mathcal{V}_k$ eventually. Since $\bigcap_k \mathcal{V}_k = \{\btheta^*\}$, $\Theta_N^* \to \{\btheta^*\}$ almost surely.\Halmos
\end{proof}

Lemma \ref{lem:set_collapse} shows that covariate variation yields asymptotic identification of $\btheta^*$. 
The next result establishes that under suitable conditions, this identification is reflected in the posterior distribution as well. 
Before stating the result, note that both error processes from Section \ref{sec:likelihoodIO} can be expressed using a common representation for the observation density conditional on the optimal solution given $\btheta$, denoted $\nu(\by \mid \bxopt(\bu, \btheta))$. This provides a convenient abstraction for the analysis below. For decision-space error, set $\nu = f$. For objective-space error, 
\begin{equation}
\label{eq:obj_err_likelihood}
    \nu(\by \mid \bx^\star(\bu, \btheta)) = \int_{\left\{ \tilde{\btheta} \in \Theta \;\middle|\; 
\bx^\star(\bu,\tilde{\btheta}) = \by \right\}} 
p(\tilde{\btheta} \mid \btheta) \, d\tilde{\btheta}.
\end{equation}
This likelihood is determined by a unique perturbed parameter vector when each $\tilde{\btheta}$ generates a distinct $\bx^\star(\bu, \tilde{\btheta})$.

\begin{theorem}[Posterior consistency]
\label{th:identification}
Let \(\mathcal{D}_N = \{(\bu^i, \by^i)\}_{i=1}^N\) be i.i.d. samples, where  \(\bu^i \sim \mathbb{P}_{\mathcal{U}}\) and \(\by^i \mid \bu^i \sim \nu(\cdot \mid \bx^\star(\bu^i, \btheta^*))\).  Let $\pi(\btheta)$ be the prior distribution, $\nu(\cdot \mid \bx^\star(\bu, \btheta))$ be the likelihood function, and $\pi(\btheta \mid \mathcal{D}_N)$ be the posterior obtained via Bayesian updating according to \eqref{eq:posterior}.
Let  $D\!\left(\cdot\,\middle\|\,\cdot\right)$ be the Kullback–Leibler (KL) divergence and $\mathcal{U}(\btheta) := \left\{ \bu \in \mathcal{U} \mid \bx^\star(\bu,\btheta) \neq \bx^\star(\bu,\btheta^*) \right\}$. Assume: 
\begin{enumerate} [label=(A\arabic*)]

    \item \textbf{(Covariate variation)}  For all $\btheta \neq \btheta^*$, $\mathbb{P}_{\mathcal{U}}(\mathcal{U}(\btheta)) > 0$.

    \item \textbf{(Observation model)} For all $\bx \neq \bx'$, the conditional densities satisfy $\nu(\cdot \mid \bx) \neq \nu(\cdot \mid \bx')$ and are defined with respect to the same underlying measure.

    \item \textbf{(Regularity at $\btheta^*$)} For all $\bu$ (except possibly on a $\mathbb{P}_{\mathcal{U}}$-measure zero set), $\bx^\star(\bu, \btheta)$ is continuous in $\btheta$ at $\btheta^*$ and $\nu(\cdot \mid \bx)$ is continuous in $\bx$. Furthermore, there exists a $\mathbb{P}_{\mathcal{U}}$-integrable function $\zeta: \mathcal{U} \to \mathbb{R}_+$ and a neighborhood of $\btheta^*$ on which $D\!\left(\nu(\cdot \mid \bx^\star(\bu, \btheta^*)) \,\middle\|\, \nu(\cdot \mid \bx^\star(\bu, \btheta))\right) \leq \zeta(\bu)$.

    \item \textbf{(Prior support)} The prior $\pi(\btheta)$ assigns positive mass to every neighborhood of $\btheta^*$.

\end{enumerate}
Then the posterior satisfies $\pi(\btheta \in \mathcal{A} \mid \mathcal{D}_N) \to 1$ almost surely for every open set $\mathcal{A}$ containing $\btheta^*$. 

\end{theorem}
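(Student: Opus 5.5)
The natural route is Schwartz's theorem on posterior consistency. Write the joint density of a pair $(\bu,\by)$ under parameter $\btheta$ as $p_{\btheta}(\bu,\by) = \nu(\by \mid \bx^\star(\bu,\btheta))$ times the density of $\mathbb{P}_{\mathcal{U}}$. Since the covariate marginal does not depend on $\btheta$, it cancels in the posterior. Schwartz's theorem then needs two ingredients: (i) a Kullback--Leibler support condition, meaning the prior charges every KL neighborhood of $p_{\btheta^*}$; and (ii) for each open $\mathcal{A}\ni\btheta^*$, an exponentially consistent test, or equivalently uniform separation, of $\btheta^*$ against $\Theta\setminus\mathcal{A}$. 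Given both, $\pi(\mathcal{A}^c\mid\mathcal{D}_N)\to 0$ almost surely.

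\emph{Step 1 (KL support).} The KL divergence between the joint laws is
\[
K(\btheta) := \mathbb{E}_{\bu}\!\left[D\!\left(\nu(\cdot \mid \bx^\star(\bu,\btheta^*))\,\middle\|\,\nu(\cdot \mid \bx^\star(\bu,\btheta))\right)\right].
\]
By (A3), for almost every $\bu$ the integrand tends to $0$ as $\btheta\to\btheta^*$. This uses continuity of $\bx^\star$ at $\btheta^*$ and continuity of $\nu(\cdot\mid\bx)$ in $\bx$; I would interpret the latter as continuity of the KL divergence, e.g.\ via Scheffé plus the domination, and this point may need care. The integrand is dominated by $\zeta(\bu)$, so dominated convergence gives $K(\btheta)\to 0$. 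Hence every KL ball $\{K<\epsilon\}$ contains a neighborhood of $\btheta^*$, which has positive prior mass by (A4).

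\emph{Step 2 (separation/tests).} For $\btheta\neq\btheta^*$, (A1) gives a positive-measure set $\mathcal{U}(\btheta)$ on which $\bx^\star(\bu,\btheta)\neq\bx^\star(\bu,\btheta^*)$. By (A2) the conditional densities differ there, so $p_{\btheta}\neq p_{\btheta^*}$ and the Hellinger distance $H(\btheta)>0$. To build uniformly consistent tests on the compact set $\Theta\setminus\mathcal{A}$, I would follow a Wald-type covering argument in the style of Lemma~\ref{lem:set_collapse}: cover $\Theta\setminus\mathcal{A}$ by finitely many small sets, attach to each a test of $p_{\btheta^*}$ against the convex hull of the densities in that set, and combine. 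This gives the standard Le Cam--Schwartz exponential bound, since Hellinger-separated convex sets admit tests with errors $e^{-cN}$.

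\emph{Main obstacle.} The hard step is the uniformity in Step 2. Pointwise positivity of $H(\btheta)$ does not give a uniform lower bound over a small neighborhood, because $\bx^\star(\cdot,\btheta)$ need not be continuous in $\btheta$ away from $\btheta^*$ (it can jump in LPs). My first attempt would use lower semicontinuity instead of continuity. Alternatively, I would work with the Hellinger affinity $\rho(\btheta)=\int\sqrt{p_{\btheta}p_{\btheta^*}}$, bound $\sup_{\btheta'\in\mathcal{O}}\rho(\btheta')$ by integrating a pointwise supremum over $\by$, and use monotone convergence as the neighborhood shrinks. If that still fails, I would fall back on the structure of Lemma~\ref{lem:set_collapse}, strengthening (A1) to its neighborhood-uniform version, and run Schwartz with the resulting finite cover. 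Step 3 then combines the two pieces in the usual way. The numerator $\int_{\mathcal{A}^c}\prod_i p_{\btheta}/p_{\btheta^*}\,d\pi$ decays exponentially by the tests and Borel--Cantelli. The denominator is eventually at least $e^{-N\epsilon}$ by Step 1 and the strong law applied to log-likelihood ratios. Together these give $\pi(\mathcal{A}\mid\mathcal{D}_N)\to 1$ almost surely.
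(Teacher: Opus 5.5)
Your skeleton matches the paper's. It defines $K(\btheta^*,\btheta)=\int_{\mathcal{U}} D\bigl(\nu(\cdot\mid\bx^\star(\bu,\btheta^*))\,\|\,\nu(\cdot\mid\bx^\star(\bu,\btheta))\bigr)\,d\mathbb{P}_{\mathcal{U}}$ and gets $K(\btheta^*,\btheta)>0$ for $\btheta\neq\btheta^*$ from (A1)--(A2). It gets continuity of $K(\btheta^*,\cdot)$ at $\btheta^*$ from (A3) by dominated convergence, and the KL-support condition from (A4). That is exactly your Step~1 plus the pointwise part of your Step~2. Your hesitation in Step~1 is warranted: Scheff\'e gives $L^1$ rather than KL convergence, and the $\zeta(\bu)$ bound in (A3) dominates over $\bu$, not inside the $\by$-integral. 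The paper makes the same leap, which is harmless for the Gaussian and vMF models, where the KL divergence is explicitly continuous in $\bx$.

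You part ways with the paper on the testing condition. The paper never constructs tests; it quotes Schwartz's theorem in the form ``compact $\Theta$ $+$ identifiability $+$ KL support $\Rightarrow$ consistency'' and stops. You instead treat uniformly consistent tests of $\btheta^*$ against $\Theta\setminus\mathcal{A}$ as something to be proved, and leave it unproved: you offer three candidate strategies, and the last one alters the hypotheses. As written, your proposal has a genuine gap at Step~2.

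The gap is not an artifact of your route. Compactness plus identifiability yields such tests when $\btheta\mapsto P_{\btheta}$ (the joint law of $(\bu,\by)$) is continuous on all of $\Theta$, because the image of $\Theta\setminus\mathcal{A}$ is then compact and excludes $P_{\btheta^*}$. Assumption (A3) gives continuity only at $\btheta^*$, and without continuity elsewhere the implication fails. For example, take $P_\theta=\mathcal{N}(m(\theta),\mathbf{I}_2)$ on $\Theta=[0,1]$, where $m$ is injective and continuous on $[0,1)$, traces a loop with $m(\theta)\to m(0)$ as $\theta\to 1$, and $m(1)$ lies off the loop. This model is compact, identifiable and KL-supported under a uniform prior, yet posterior-inconsistent at $\theta^*=0$, because non-vanishing posterior mass remains near $\theta=1$.

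None of your three fallbacks closes the gap under (A1)--(A4):
\begin{itemize}
\item Lower semicontinuity of $\btheta\mapsto H(P_{\btheta},P_{\btheta^*})$ away from $\btheta^*$ is not implied by anything assumed.
\item The Wald-type bound $\int\sqrt{p_{\btheta^*}\sup_{\btheta'\in\mathcal{O}}p_{\btheta'}}\to\int\sqrt{p_{\btheta^*}p_{\btheta}}<1$ needs upper semicontinuity of $\btheta'\mapsto\nu(\by\mid\bx^\star(\bu,\btheta'))$ at every $\btheta\in\Theta\setminus\mathcal{A}$, plus an integrable local envelope. This is exactly what breaks when $\bx^\star(\bu,\cdot)$ jumps at $\btheta$, since the $\limsup$ then picks up the likelihood at other points of $\mathcal{X}^\star(\bu,\btheta)$.
\item The locally uniform condition of Lemma~\ref{lem:set_collapse} yields only $\bx^\star(\bu,\btheta')\neq\bx^\star(\bu,\btheta^*)$, not a lower bound on their distance. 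You would still need quantitative separation of optimal solutions, and a way to turn it into separation of $P_{\btheta^*}$ from the convex hull of $\{P_{\btheta'}:\btheta'\in\mathcal{O}_{\btheta}\}$.
\end{itemize}
To finish, you can add a hypothesis that supplies this, such as the Wald upper-semicontinuity-plus-envelope condition on $\Theta\setminus\mathcal{A}$. Alternatively, extract it from the forward problem itself, for instance starting from the upper hemicontinuity of $\btheta\mapsto\mathcal{X}^\star(\bu,\btheta)$ given by Berge's maximum theorem.
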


\begin{proof}{Proof.}

Let
    $K(\btheta^*, \btheta) := \int_{\mathcal{U}} D\!\left(\nu(\cdot \mid \bx^\star(\bu,\btheta^*)) \,\middle\|\, \nu(\cdot \mid \bx^\star(\bu,\btheta))\right) d\mathbb{P}_{\mathcal{U}}$.
For any $\btheta \neq \btheta^*$ and any $\bu \in \mathcal{U}(\btheta)$, $\nu(\cdot \mid \bx^\star(\bu,\btheta)) \ne \nu(\cdot \mid \bx^\star(\bu,\btheta^*))$ by (A2), so $D\!\left(\nu(\cdot \mid \bx^\star(\bu,\btheta^*)) \,\middle\|\, \nu(\cdot \mid \bx^\star(\bu,\btheta))\right) > 0$. 
Since $\mathcal{U}(\btheta)$ has positive $\mathbb{P}_{\mathcal{U}}$-measure by (A1), it follows that $K(\btheta^*, \btheta) > 0$ for all $\btheta \neq \btheta^*$. 

Next we show that $K(\btheta^*, \cdot)$ is continuous at $\btheta^*$. By (A3), for almost every $\bu$, $\bx^\star(\bu, \btheta) \to \bx^\star(\bu, \btheta^*)$ as $\btheta \to \btheta^*$, and so $\nu(\cdot \mid \bx^\star(\bu, \btheta)) \to \nu(\cdot \mid \bx^\star(\bu, \btheta^*))$. 
Hence, 
$D\!\left(\nu(\cdot \mid \bx^\star(\bu, \btheta^*)) \,\middle\|\, \nu(\cdot \mid \bx^\star(\bu, \btheta))\right) \to 0$ pointwise as $\btheta \rightarrow \btheta^*$. Combined with the integrability bound in (A3), the dominated convergence theorem implies $K(\btheta^*, \btheta) \to 0$ as $\btheta \to \btheta^*$, i.e., $K(\btheta^*, \cdot)$ is continuous at $\btheta^*$.

To complete the proof, we require Schwartz's theorem \citep{schwartz1965bayes}, which says $\pi(\btheta \in \mathcal{A} \mid \mathcal{D}_N) \to 1$ as $N \rightarrow \infty$ for every open set $\mathcal{A}$ that contains $\btheta^*$ if (i) the parameter space is compact, (ii) the model is identifiable, and (iii) there is positive prior mass on every KL neighborhood of $\btheta^*$, i.e., for all $\epsilon > 0$, $\pi(\mathcal{A}_{\text{KL}}(\btheta^*, \epsilon)) > 0$, where $\mathcal{A}_{\text{KL}}(\btheta^*, \epsilon) := \{ \btheta \in \Theta \mid K(\btheta^*, \btheta) < \epsilon \}$. The first condition is met since $\Theta$ is compact. The second is implied by (A1) and (A2). 
For the third condition, since $K(\btheta^*, \cdot)$ is continuous at $\btheta^*$ and $K(\btheta^*,\btheta^*) = 0$, $\mathcal{A}_{\text{KL}}(\btheta^*, \epsilon)$ contains an open neighborhood of $\btheta^*$. By (A4), the prior therefore assigns positive mass to $\mathcal{A}_{\text{KL}}(\btheta^*, \epsilon)$ for every $\epsilon > 0$. The result follows.\Halmos
\end{proof}

\begin{remark}[Theorem \ref{th:identification} Assumptions]
Assumption (A1) requires that for any parameter $\btheta \neq \btheta^*$, the dataset is sufficiently rich that there exists an instance $\bu^i$ such that the optimal solution with respect to $\btheta$ differs from that with respect to $\btheta^*$. Without (A1), $\Theta^*_N$ may not collapse to a singleton. 
Assumption (A2) requires the observation model be sufficiently informative to distinguish different optimal decisions.  This holds, for example, with a Gaussian noise model such as in Example 1. Assumption (A3) holds for the Gaussian and vMF observation models considered in this paper. In both cases, $\nu(\cdot \mid \bx)$ is continuous in $\bx$, and $\btheta \mapsto \bx^\star(\bu, \btheta)$ is continuous at $\btheta^*$ for almost every $\bu$. Moreover, the corresponding KL divergence is uniformly bounded under compactness of $\mathcal{X(\bu)}$ (Gaussian case) and on the unit sphere (vMF case). Assumption (A4) is standard in the Bayesian inference literature and is satisfied by priors with full support on $\Theta$ such as the uniform distribution.
\end{remark}

\begin{remark}[Relationship to Lemma \ref{lem:set_collapse}]
The assumption in Lemma \ref{lem:set_collapse} strengthens the requirement that each $\btheta \neq \btheta^*$ be distinguishable from $\btheta^*$ on a positive-measure covariate set, by requiring that the distinguishability hold locally uniformly in a neighborhood of $\btheta$. 
This is natural when $\bx^\star(\bu,\cdot)$ is continuous in $\btheta$ and distinct parameter values induce separated optimal solutions on positive-measure sets of covariates, as is typical in well-identified strictly convex forward problems.
Note that this strengthened condition implies (A1) of Theorem~\ref{th:identification}; the additional local uniformity is required only for the set-convergence result and not for posterior consistency.    
\end{remark}

\begin{remark}[Additional parameters]
Theorem~\ref{th:identification} focuses on consistency of $\btheta$. Often, additional parameters are estimated jointly with $\btheta$ (see Section 7). Extending the consistency result in this case follows standard arguments for finite-dimensional parametric models and is omitted for brevity. 
\end{remark}

Theorem~\ref{th:identification} establishes that the posterior concentrates around $\btheta^*$ as the number of observations increases. The proof follows the standard structure of posterior consistency results in Bayesian inference, but is suitably adapted to incorporate the optimization layer in our hierarchical Bayesian model: the identifiability condition (A1) is expressed in terms of covariate variation rather than direct distinguishability of the likelihood, and the KL divergence is integrated over the covariate distribution to account for the role of the optimization in linking parameters to observations.

\section{Uncertainty Quantification}
\label{sec:uq_new}

The posterior distribution of $\btheta$ characterizes the uncertainty in the estimated parameter vector.
Since inference is restricted to the unit hypersphere $\mathcal{S}^{h-1}$, uncertainty is naturally measured in terms of angular (geodesic) distance. To construct geometrically relevant credible regions, we adopt a tangent space approximation based on tools from directional statistics and Riemannian geometry (see, e.g., \cite{mardia2009directional} and \cite{absil2008optimization}). 

\subsection{Constructing Credible Regions}

Assume $\{\btheta^1,\ldots,\btheta^T\}$ is the set of (post warm-up) posterior samples from Algorithm~\ref{algo:decision} or Algorithm~\ref{algo:objective}. 
The construction proceeds in three steps (illustrated in Figure~\ref{fig:mappings}): (a) compute a posterior mean direction, (b) map posterior samples to the tangent space via the logarithmic map, and (c) construct an ellipsoidal credible region in the tangent space. 
This mapping follows standard definitions from Riemannian geometry \citep{absil2008optimization}. 

\subsubsection{Posterior Mean Direction.}

The posterior mean direction is calculated as
\[
\hat{\btheta}
:=
\frac{\frac{1}{T}\sum_{t=1}^T \btheta^t}{\left\|\frac{1}{T}\sum_{t=1}^T \btheta^t\right\|_2}.
\]
It represents the central direction of the posterior distribution.

\begin{figure}[htbp]
  \centering
  \includegraphics[width=0.7\linewidth]{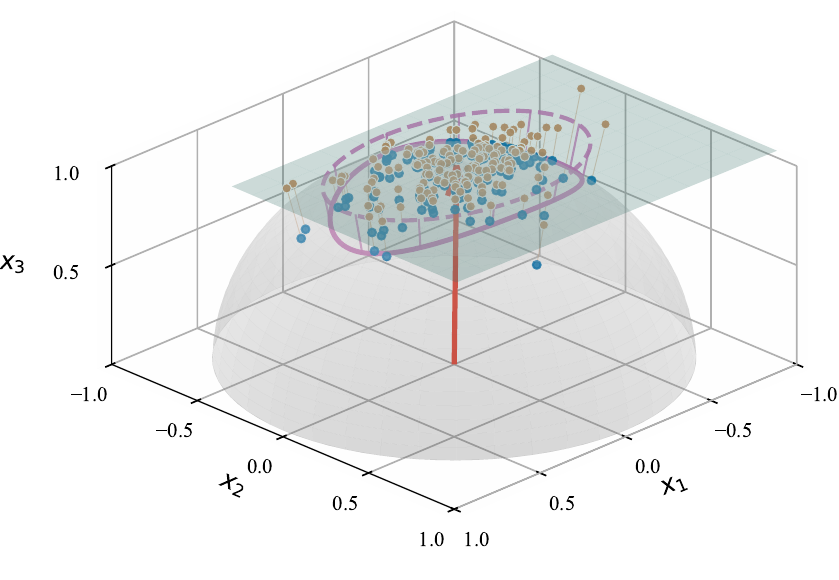}
  \caption{Construction of the credible region on the hypersphere. Posterior samples (blue dots) are mapped to points (orange dots) in the hyperplane tangent to the hypersphere at the mean direction (red arrow). The ellipsoidal region is generated in the tangent space (dashed curve) and can be mapped back to the hypersphere (solid curve).}
  \label{fig:mappings}
\end{figure}

\subsubsection{Mapping to Tangent Space.}

Let
\(
\Delta_{\hat{\btheta}}
=
\{ \bv \in \mathbb{R}^h \,|\, (\bv - \hat{\btheta})^\tpose \hat{\btheta} = 0 \}
\)
denote the affine tangent space to $\mathcal{S}^{h-1}$ at $\hat{\btheta}$. We map each posterior sample $\btheta^{t}$ to a point $\boldeta^t \in \Delta_{\hat{\btheta}}$ via $\boldeta^{t} = \log_{\hat{\btheta}}(\btheta^{t})$, where 
\begin{equation*}
\log_{\hat{\btheta}}(\btheta)
:=
\hat{\btheta}
+
\frac{
\btheta - (\hat{\btheta}^{\tpose} \btheta)\hat{\btheta}
}{
\|\btheta - (\hat{\btheta}^{\tpose} \btheta)\hat{\btheta}\|_2
}
\,
\arccos(\hat{\btheta}^{\tpose} \btheta).
\end{equation*}

The Euclidean distance $\|\boldeta^{t} - \hat{\btheta}\|_2$ in the tangent space equals the geodesic (angular) distance between $\btheta^{t}$ and $\hat{\btheta}$. 
When posterior mass is sufficiently concentrated (as expected under posterior consistency; see Theorem~\ref{th:identification}), geodesic distances between points on the hypersphere will be well-approximated by Euclidean distances of the corresponding points in the tangent space. 

\subsubsection{Ellipsoidal Credible Region.}

Let the empirical mean and covariance matrix of the points in the tangent space be 
\(
\bar{\boldeta}
=
\frac{1}{T}
\sum_{t=1}^{T} \boldeta^{t}
\)
and
\(
\bSigma
=
\frac{1}{T}
\sum_{t=1}^{T}
\left(\boldeta^{t} - \bar{\boldeta}\right)
\left(\boldeta^{t} - \bar{\boldeta}\right)^{\tpose}
\).
Define the squared Mahalanobis distances that measure how far each posterior sample is from the mean direction, accounting for the empirical covariance, by $d^{t} := \left(\boldeta^{t} - \bar{\boldeta}\right)^{\tpose}
\bSigma^{-1}
\left(\boldeta^{t} - \bar{\boldeta}\right)$, $t  = 1, \ldots, T.$

Let $q_{1-\alpha}$ denote the empirical $(1-\alpha)$-quantile of $\{d^1, \ldots d^T\}$. The corresponding ellipsoid  
\begin{equation*}
E_{1-\alpha}
=
\left\{
\boldeta \in \Delta_{\hat{\btheta}}\,\middle|\,
(\boldeta - \bar{\boldeta})^{\tpose}
\bSigma^{-1}
(\boldeta - \bar{\boldeta})
\le q_{1-\alpha}
\right\}
\end{equation*}
summarizes posterior variability in the tangent space. 
The credible region on the hypersphere is therefore 
$
C_{1-\alpha}
=
\left\{
\btheta \in \mathcal{S}^{h-1} \,\middle|\,
\log_{\hat{\btheta}}(\btheta) \in E_{1-\alpha}
\right\}.
$
Geometrically, $C_{1-\alpha}$ forms an anisotropic region on the hypersphere, with shape determined by the posterior covariance structure. Note that $E_{1-\alpha}$ is centered at $\bar{\boldeta}$, which is generally not equal to $\hat{\btheta}$, but this difference vanishes as $N \rightarrow \infty$ and $T \rightarrow \infty$. 

\begin{remark}
Since $\boldeta^t, t = 1, \ldots, T$ lie in the $(h-1)$-dimensional affine subspace $\Delta_{\hat{\btheta}}$, the empirical covariance $\bSigma$ is rank-deficient and its inverse does not exist. We therefore regularize $\bSigma$ by adding a small constant $\epsilon > 0$ along the diagonal, so that it becomes $\bSigma + \epsilon \mathbf{I}$, where $\mathbf{I}$ is the identity matrix. 
For sufficiently small $\epsilon$, this regularization has negligible impact relative to posterior uncertainty in the numerical experiments.
\end{remark}

\subsection{Scalar Summary of Posterior Dispersion}
Next, we propose an interpretable summary of posterior dispersion. Let $\btheta \sim \pi(\cdot \mid \mathcal{D}_N)$ and $\boldeta = \log_{\hat{\btheta}}(\btheta)$, where $\hat{\btheta} := \mathbb{E}[\btheta \mid \mathcal{D}_N] / \|\mathbb{E}[\btheta \mid \mathcal{D}_N]\|_2$. Let $\bSigma_p := \mathrm{Cov}[\boldeta \mid \mathcal{D}_N]$ denote the posterior covariance of $\boldeta$. Let $\mathrm{tr}(\bA)$ denote the trace of matrix $\bA$. We can compactly summarize posterior dispersion using the root mean squared angular deviation of $\btheta$ from the posterior mean direction $\hat{\btheta}$:
\begin{equation*}
\alpha_{\mathrm{RMS}} =
\sqrt{\frac{\mathrm{tr}(\bSigma_p)}{h - 1}}.
\end{equation*}

\begin{proposition}
Under the assumptions of Theorem~\ref{th:identification}, $\alpha_{\mathrm{RMS}} \to 0$ as $N \to \infty$.
\end{proposition}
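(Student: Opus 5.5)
Since $\Theta=\mathcal{S}^{h-1}$ is compact, the plan is to turn the posterior consistency of Theorem~\ref{th:identification} into weak convergence of the posterior to $\delta_{\btheta^*}$, and then pass this convergence through the bounded functionals that define $\alpha_{\mathrm{RMS}}$. Throughout we work on the almost-sure event where $\pi(\btheta\in\mathcal{A}\mid\mathcal{D}_N)\to 1$ for every open $\mathcal{A}\ni\btheta^*$. A countable neighborhood base (geodesic balls of radius $1/k$) is enough, so the union of the exceptional null sets is still null.

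\emph{Step 1: the mean direction.} Write $\bm_N:=\mathbb{E}[\btheta\mid\mathcal{D}_N]$. Fix $\epsilon>0$ and let $B_\epsilon$ be the ball of radius $\epsilon$ around $\btheta^*$. Every $\btheta\in\mathcal{S}^{h-1}$ satisfies $\|\btheta-\btheta^*\|_2\le 2$. Splitting the expectation over $B_\epsilon$ and its complement gives
\[
\|\bm_N-\btheta^*\|_2\le \epsilon+2\,\pi(B_\epsilon^c\mid\mathcal{D}_N),
\]
so $\bm_N\to\btheta^*$. In particular $\|\bm_N\|_2\to 1$, so $\hat{\btheta}$ is well defined for large $N$ and $\hat{\btheta}\to\btheta^*$.

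\emph{Step 2: bounding the trace.} Since $\mathrm{tr}(\mathrm{Cov}[\boldeta])=\mathbb{E}\|\boldeta-\mathbb{E}\boldeta\|_2^2\le \mathbb{E}\|\boldeta-\hat{\btheta}\|_2^2$ (the mean minimizes the expected squared deviation), and $\|\log_{\hat{\btheta}}(\btheta)-\hat{\btheta}\|_2=\arccos(\hat{\btheta}^\tpose\btheta)$, we get
\[
\mathrm{tr}(\bSigma_p)\le \mathbb{E}\!\left[\arccos^2(\hat{\btheta}^\tpose\btheta)\,\middle|\,\mathcal{D}_N\right].
\]
The log map is undefined at $\btheta=\hat{\btheta}$, where we set it to $\hat{\btheta}$, and also at the antipode $-\hat{\btheta}$. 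This does not affect the argument, because the integrand is bounded by $\pi^2$ on all of $\mathcal{S}^{h-1}$.

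\emph{Step 3: the integral vanishes.} Geodesic distance is a metric, so $\arccos(\hat{\btheta}^\tpose\btheta)\le \arccos(\hat{\btheta}^\tpose\btheta^*)+\arccos(\btheta^{*\tpose}\btheta)$. The first term tends to zero by Step 1. For the second term we split again over $B_\epsilon$: on $B_\epsilon$ it is $O(\epsilon)$, and off $B_\epsilon$ it is at most $\pi$, which contributes at most $\pi^2\,\pi(B_\epsilon^c\mid\mathcal{D}_N)\to 0$ to the squared expectation. Letting $\epsilon\downarrow 0$ shows the right-hand side of Step 2 tends to zero. Hence $\alpha_{\mathrm{RMS}}\to 0$ almost surely. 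The constant $h-1$ plays no role in the limit.

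The only delicate point is Step 1: the normalization must be well behaved, and the random centering $\hat{\btheta}$ must be controlled. This requires upgrading setwise consistency to convergence of the mean, and the uniform bound $\|\btheta\|_2=1$ on the compact sphere is what makes that upgrade work. Every other step is dominated convergence with a bounded integrand.
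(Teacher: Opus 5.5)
Your proof is correct and follows essentially the same route as the paper. You bound $\mathrm{tr}(\bSigma_p)$ by $\mathbb{E}[\arccos^2(\hat{\btheta}^\tpose\btheta)\mid\mathcal{D}_N]$ using that the mean minimizes expected squared deviation, then drive this bounded integrand to zero via posterior consistency, and your explicit $\epsilon$-splitting, proof that $\hat{\btheta}\to\btheta^*$, triangle-inequality handling of the data-dependent centering, and countable neighborhood base only make explicit details the paper's appeal to uniform integrability leaves implicit.
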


\begin{proof}{Proof.}
Since $\boldeta = \log_{\hat{\btheta}}(\btheta)$, $\|\boldeta - \hat{\btheta}\|_2 = \arccos(\hat{\btheta}^\top \btheta)$. By Theorem~\ref{th:identification}, $\btheta \to \btheta^*$ in probability as $N \to \infty$, so $\hat{\btheta} \to \btheta^*$ and  $\|\boldeta - \hat{\btheta}\|_2 \rightarrow 0$ in probability as $N \rightarrow \infty$. Since $\arccos(\cdot) \le \pi$, $\|\boldeta - \hat{\btheta}\|_2$ is uniformly bounded above for all $\btheta \in \mathcal{S}^{h-1}$ and all $N$. This implies uniform integrability, and so convergence in probability implies convergence of second moments:
$\mathbb{E}\big[\|\boldeta - \hat{\btheta}\|_2^2 \,\big|\, \mathcal{D}_N\big] \to 0$.

The trace of $\bSigma_p$ equals the sum of the variances along the diagonal,
\begin{equation*}
\begin{aligned}
  \mathrm{tr}(\bSigma_p)
  &= \mathbb{E}\big[\|\boldeta - \mathbb{E}[\boldeta \mid \mathcal{D}_N]\|_2^2 \,\big|\, \mathcal{D}_N\big] \\
  &\le \mathbb{E}\big[\|\boldeta - \hat{\btheta}\|_2^2 \,\big|\, \mathcal{D}_N\big],
\end{aligned}
\end{equation*}
where the inequality follows from the fact that the quantity that minimizes the mean squared distance from $\boldeta$ is its expectation. Hence, $\mathrm{tr}(\bSigma_p) \to 0$ as $N \to \infty$, implying $\alpha_{\mathrm{RMS}} \to 0$.
\Halmos
\end{proof}

In practice, $\alpha_{\mathrm{RMS}}$ is computed using the empirical covariance of MCMC samples in place of the posterior covariance $\bSigma_p$. 
This empirical covariance converges almost surely to $\bSigma_p$ as $T \to \infty$ \citep{robert2004monte}, so the empirical $\alpha_{\mathrm{RMS}}$ inherits the same asymptotic behavior as $\alpha_{\mathrm{RMS}}$ defined by $\bSigma_p$. We report the empirical $\alpha_{\mathrm{RMS}}$ in our numerical experiments in Section~\ref{sec:numericals}.

\section{Numerical Experiments} 
\label{sec:numericals}

We illustrate the application of our algorithms to estimate credible regions in linearly and quadratically constrained problems with a linear objective. To distinguish them, we will refer to them as LP and QP, respectively. The focus is on empirically assessing coverage of the credible regions. 

\subsection{Simulation Setup}

Details on the optimization model formulations and the corresponding simulated datasets are given in \ref{ec:numerical}. The data-generating and inference procedures are repeated 100 times. Coverage is defined as the proportion of those simulations in which the true parameter vector $\bthetastar$ lies within the constructed credible region. All credible regions are constructed at the $95\%$ ($\alpha = 0.05$) level following the approach from Section~\ref{sec:uq_new}. 

In Algorithm~\ref{algo:decision}, we assume the likelihood $f$ is Gaussian with mean $\bx^\star(\bu,\btheta^*)$ and unknown covariance matrix $\sigma^2\mathbf{I}$. 
We place a uniform prior on $\btheta$ and a half-Cauchy prior on $\sigma$ \citep{gelman2013bayesian}. We update $\btheta$ and $\sigma$ within each MCMC iteration separately using Gaussian random walks. 
In Algorithm~\ref{algo:objective}, we assume a von Mises-Fisher perturbation distribution $p$ centered at $\btheta^*$ with unknown concentration parameter $\kappa$. We place a uniform prior on $\btheta$ and a Gamma prior on $\kappa$, 
and update them using a Gaussian random walk for $\btheta$ and a log-normal proposal for $\kappa$. 
The IO problems are solved as outlined in \ref{sec:io_solves}. 

For both algorithms, we choose a Gaussian proposal distribution $q$ because it is easy to sample from in any dimension and the covariance matrix can be tuned to match the shape and scale of the posterior distribution, which is important for efficient mixing in high dimensions. Additional details regarding the proposal distribution including alternatives considered are provided in \ref{sec:proposal}.

We run each chain until the PSRF falls below 1.1 across all parameters; in our experiments, this required between 10{,}000 and 100{,}000 iterations per chain. A warm-up period of 25\% of the total iterations is discarded in all cases. 
The reported times are the wall-clock durations of one run of each algorithm on a single processor, including the warm-up iterations.

\subsection{Results}

Table~\ref{tab:lp} summarizes the empirical coverage percentages and the mean $\alpha_\text{RMS}$ ($\pm$ 1 standard deviation) across 100 replications, obtained by applying each algorithm to an LP model with data generated under its corresponding error model.
We vary the number of observed decisions ($N$), the variance  ($\sigma^2$) of the Gaussian distribution in Algorithm \ref{algo:decision}, and the concentration parameter ($\kappa$) of the vMF distribution (see~\ref{ec:vmf}) in Algorithm \ref{algo:objective}. Table~\ref{tab:qp} reports the analogous results for the QP.

Both tables show that empirical coverage is near the nominal 95\% level for all $N$, $\sigma$ and $\kappa$. Both the mean and standard deviation of $\alpha_\text{RMS}$ are largest for small $N$. These values are particularly large in the LP case because of the relatively few data points to resolve the nonidentifiability. In all cases, the size and variability of the credible region decreases with increasing $N$, as expected. 

The credible regions for the QP tend to be smaller, particularly for small $N$, reflecting the strict convexity of the QP feasible region, which ensures that each observation provides 
precise identifying information about the true parameter vector. In the LP case, since multiple parameter vectors can generate the same optimal solution, residual directional uncertainty manifests through larger credible regions. 
Second, the standard deviation of $\alpha_\text{RMS}$ is markedly smaller and more stable across 
replications for the QP than for the LP, particularly under Algorithm~\ref{algo:decision}. 
Since the QP is identifiable, the posterior concentrates consistently across replications, yielding stable credible region sizes. For the LP, the directions that remain uncertain vary across replications depending on the realized covariate set, inflating variability. 

Finally, we find that Algorithm \ref{algo:decision} is more computationally demanding than Algorithm \ref{algo:objective}, as expected. The computation times for Algorithm \ref{algo:decision} grow approximately linearly with $N$, whereas for Algorithm \ref{algo:objective}, they grow much more slowly with
$N$ (particularly for the QP). The result is about two orders of magnitude less time for the largest $N$ values we considered.

\begin{table}[t]
\centering
\small
\caption{Applying Algorithms~\ref{algo:decision} and~\ref{algo:objective} to LP instances.}
\label{tab:lp}
\begin{tabular}{cccccccccc}
\toprule
 & \multicolumn{4}{c}{Algorithm 1} & \multicolumn{4}{c}{Algorithm 2 } \\
\cmidrule(r){2-5} \cmidrule(l){6-9}
$N$ & $\sigma^2$ & Coverage (\%) & $\alpha_{\mathrm{RMS}}$ (deg) & Time (s)
    & $\kappa$ & Coverage (\%) & $\alpha_{\mathrm{RMS}}$ (deg) & Time (s) \\
\midrule

50   & 0.05 & 95 & 42.61 $\pm$ 13.42 & 1703 & 10 & 97 & 39.96 $\pm$ 10.92 & 161 \\
100  &      & 96 & 26.40 $\pm$ 9.16  & 3771 &    & 99 & 26.36 $\pm$ 4.47  & 168 \\
200  &      & 94 & 14.29 $\pm$ 4.12  & 8542 &    & 97 & 17.82 $\pm$ 2.01  & 330\\
500  &      & 97 & 5.95 $\pm$ 2.04   & 36,941 &    & 93 & 10.88 $\pm$ 0.83  & 449\\
1000 &      &  95  & 3.34 $\pm$ 1.48                 &   81,281 &  & 94 & 7.75 $\pm$ 0.37   & 653\\

\midrule

50   & 0.1 & 94 & 54.16 $\pm$ 16.29 & 1517 & 5  & 96 & 52.16 $\pm$ 17.58 & 164 \\
100  &     & 96 & 32.47 $\pm$ 10.82 & 3627 &    & 95 & 33.05 $\pm$ 8.50  & 166 \\
200  &     & 93 & 19.01 $\pm$ 5.95  & 8044 &    & 94 & 22.71 $\pm$ 3.78  & 230 \\
500  &     & 96 & 8.58 $\pm$ 2.67   & 37,349 &    & 97 & 13.94 $\pm$ 1.25  & 368 \\
1000 &     & 94 & 4.75 $\pm$ 1.42   & 39,399 &    & 98 & 9.99 $\pm$ 0.72   & 554\\

\bottomrule
\end{tabular}
\end{table}

\begin{table}[t]
\small
\centering
\caption{Applying Algorithms~\ref{algo:decision} and~\ref{algo:objective} to QP instances.}
\label{tab:qp}
\begin{tabular}{cccccccccc}
\toprule
 & \multicolumn{4}{c}{Algorithm 1} & \multicolumn{4}{c}{Algorithm 2} \\
\cmidrule(r){2-5} \cmidrule(l){6-9}
$N$ & $\sigma^2$ & Coverage (\%) & $\alpha_{\mathrm{RMS}}$ (deg) & Time (s)
    & $\kappa$ & Coverage (\%) & $\alpha_{\mathrm{RMS}}$ (deg) & Time (s) \\
\midrule

50   & 0.05 & 97  & 11.01 $\pm$ 0.31  & 499 & 10 & 97  & 21.59 $\pm$ 1.60 & 234 \\
100  &      & 93  & 8.60 $\pm$ 0.31   & 903 &    & 96  & 15.55 $\pm$ 0.89 & 254 \\
200  &      & 95  & 6.40 $\pm$ 0.27   & 2559 &    & 95  & 11.13 $\pm$ 0.37 & 278 \\
500  &      & 95  & 5.24 $\pm$ 0.27   & 6604 &    & 97  & 7.34 $\pm$ 0.18  & 307 \\
1000 &      & 96  & 3.67 $\pm$ 0.25   & 23,435 &    & 97  & 5.61 $\pm$ 0.13  & 355 \\

\midrule

50   & 0.1  & 95  & 14.58 $\pm$ 0.49  & 487 & 5  & 94  & 39.94 $\pm$ 5.67 & 206 \\
100  &      & 95  & 10.88 $\pm$ 0.32  & 863 &    & 97  & 28.25 $\pm$ 2.79 & 220 \\
200  &      & 97  & 8.01 $\pm$ 0.25   & 2460 &    & 94  & 20.24 $\pm$ 1.40 & 244\\
500  &      & 95  & 6.07 $\pm$ 0.25   & 6450 &    & 96  & 13.04 $\pm$ 0.50 & 299\\
1000 &      & 97  & 4.73 $\pm$ 0.22   & 20,102 &    & 96  & 9.42 $\pm$ 0.27 & 329\\

\bottomrule
\end{tabular}
\end{table}

\section{Conclusions}

This paper develops a probabilistic modeling framework for inverse optimization that enables uncertainty quantification of a vector that parameterizes a linear cost function. Using Bayesian inference, we construct credible regions that are both geometrically consistent and computationally tractable. Under appropriate identifiability conditions, we show that the posterior concentrates around the true parameter. We further demonstrate that the resulting credible regions achieve near-nominal empirical coverage. This framework provides a principled way to propagate uncertainty into downstream optimization models or calibrate a decision maker's confidence in IO-based inferences.

\bibliographystyle{informs2014}
\bibliography{references}

\renewcommand{\notesname}{Endnotes}   
\theendnotes

\newpage

\ECHead{Electronic Companion}

\setcounter{section}{0}
\renewcommand{\thesection}{EC.\arabic{section}}
\renewcommand{\theequation}{EC.\arabic{equation}}
\setcounter{equation}{0}
\renewcommand{\thefigure}{EC.\arabic{figure}}
\setcounter{figure}{0}

\section{von Mises--Fisher Distribution}
\label{ec:vmf}

The von Mises--Fisher (vMF) distribution is a probability distribution defined on the unit hypersphere 
$\mathcal{S}^{h-1} = \{\btheta \in \mathbb{R}^h \mid \|\btheta\|_2 = 1\}$ and is commonly used to model directional data \citep{mardia2009directional}. It can be viewed as the analogue of the multivariate normal distribution for unit-norm vectors.

A random vector $\tilde{\btheta} \in \mathcal{S}^{h-1}$ follows a vMF distribution with mean direction 
$\btheta \in \mathcal{S}^{h-1}$ and concentration parameter $\kappa \ge 0$, denoted by 
$\tilde{\btheta} \sim \mathrm{vMF}(\btheta, \kappa)$, if its density is given by
\[
p(\tilde{\btheta} \mid \btheta, \kappa) 
= C_h(\kappa) \exp\left( \kappa \, \btheta^\tpose \tilde{\btheta} \right),
\]
where $C_h(\kappa)$ is the normalizing constant
\[
C_h(\kappa) = \frac{\kappa^{h/2 - 1}}{(2\pi)^{h/2} I_{h/2 - 1}(\kappa)},
\]
and $I_\nu(\cdot)$ denotes the modified Bessel function of the first kind of order $\nu$.

The parameter $\btheta$ represents the mean direction of the distribution, while $\kappa$ controls the concentration of the distribution around $\btheta$. When $\kappa = 0$, the distribution reduces to the uniform distribution over $\mathcal{S}^{h-1}$. As $\kappa \to \infty$, the distribution becomes increasingly concentrated around $\btheta$.

In our experiments, the vMF distribution is used to model perturbations of the parameter vector in the objective-space error setting, as it naturally respects the unit-norm constraint on $\btheta$.

\section{Posterior Visualizations for Examples 1 and 2}
\label{ec:figures}

\begin{figure}[h]
    \centering
    \begin{subfigure}[b]{0.48\textwidth}
        \centering
        \includegraphics[trim=.2in .5in .35in 0in, clip, width=.49\textwidth]{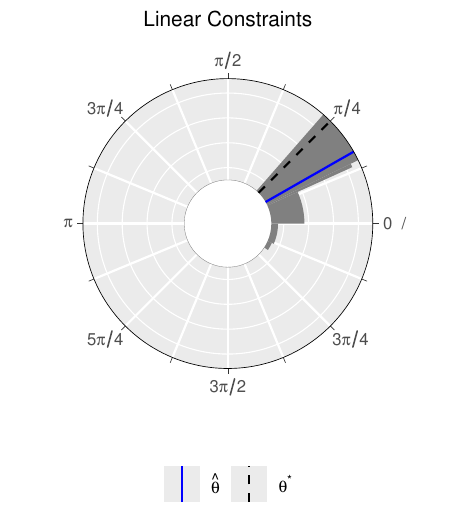}
        \includegraphics[trim=.2in .5in .35in 0in, clip, width=.49\textwidth]{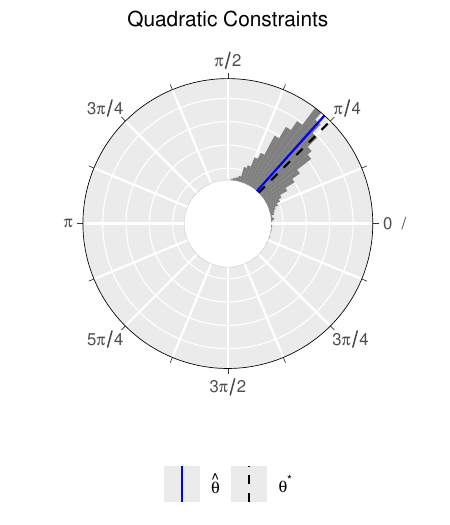}
        \caption{Example 1}
        \label{fig:posterior_dec}
    \end{subfigure}
     \hspace{0em} \vrule
    \begin{subfigure}[b]{0.48\textwidth}
        \centering
        \includegraphics[trim=.2in .5in .35in 0in, clip, width=.49\textwidth]{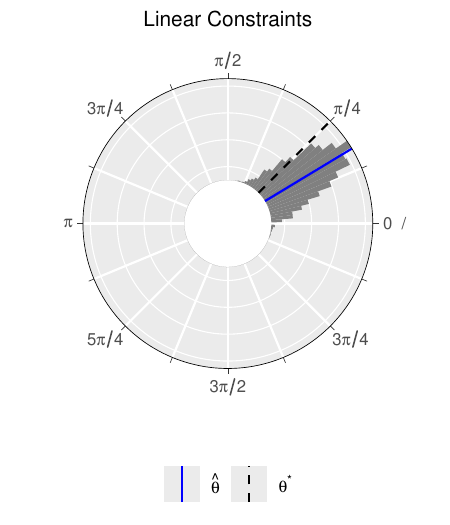}
        \includegraphics[trim=.2in .5in .35in 0in, clip, width=.49\textwidth]{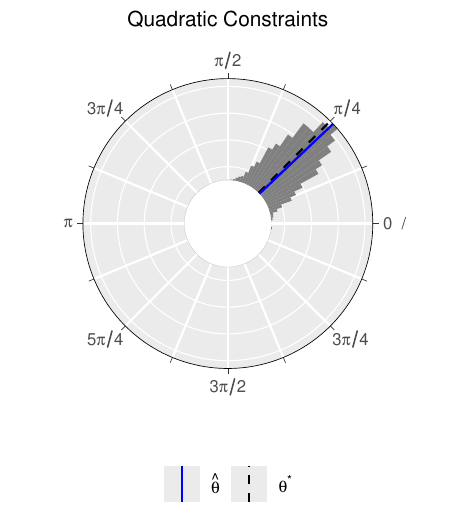}
        \caption{Example 2}
        \label{fig:posterior_obj}
    \end{subfigure}
\caption{Histograms of 10{,}000 posterior samples (post warm-up) of the angular coordinate of $\btheta$. Panel (a) corresponds to Example 1 (decision-space error) and panel (b) to Example 2 (objective-space error). In each plot, the dashed black line indicates the direction of the true parameter $\btheta^*$ and the solid blue line indicates the posterior mean direction $\hat{\btheta}$.}
    \label{fig:posterior}
\end{figure}

Figure~\ref{fig:posterior} shows histograms of 10{,}000 MCMC posterior samples (post warm-up) of the angular coordinate of $\btheta$ drawn from Algorithms \ref{algo:decision} and \ref{algo:objective} applied to the data in Examples \ref{example:decspace} and \ref{example:objspace}, respectively. In each case, we use a von Mises–Fisher distribution ($\kappa = 5$) for the proposal distribution $q(\cdot\mid\btheta)$. 

In Example 1 (decision-space error) with linear constraints, the step-like structure of the posterior arises from the many-to-one mapping between $\btheta$ and $\bx^\star(\bu^i,\btheta)$. This means the likelihood of $\bx^\star(\bu^i,\btheta)$ depends on $\btheta$ only through the ``arc'' in which it lies. 
Since all $\btheta$ in the same arc share an identical likelihood and the prior is uniform, the posterior is constant within each arc and jumps only at the arc boundaries.
In contrast, in Example 1 with quadratic constraints, there is a one-to-one mapping between $\btheta$ and $\bx^\star(\bu^i,\btheta)$, so the likelihood varies smoothly with $\btheta$ and the posterior exhibits a continuous shape. 

Interestingly, Example 2 (objective-space error) with linear constraints does not yield a step-like posterior. This is because the likelihood 
$\nu(\by^i \mid \bx^\star(\bu^i, \btheta)) = \int_{\left\{ \tilde{\btheta} \in \Theta \;\middle|\; 
\bx^\star(\bu,\tilde{\btheta}) = \by \right\}} p(\tilde{\btheta} \mid \btheta)\, d\tilde{\btheta}$ 
(Equation \eqref{eq:obj_err_likelihood} in Section \ref{sec:identifiability}) depends on $\btheta$ through the integral of a smooth perturbation density (vMF) over the fixed arc ${\left\{ \tilde{\btheta} \in \Theta \;\middle|\; 
\bx^\star(\bu,\tilde{\btheta}) = \by \right\}}$. Thus, when there is a many-to-one mapping from $\btheta$ to $\bx^\star(\bu^i, \btheta)$, the likelihood varies smoothly with $\btheta$, producing a continuous posterior. 
With quadratic constraints, the one-to-one mapping between $\btheta$ and $\bx^\star(\bu^i, \btheta)$ implies that the likelihood is determined by the unique perturbed cost recovered from $\by^i$ via \eqref{eq:inverse_prob_alg2}, i.e., $\nu(\by^i \mid \bx^\star(\bu^i, \btheta)) = p(\tilde{\btheta}^i \mid \btheta)$, where $\tilde{\btheta}^i$ is the unique perturbed cost associated with $\by^i$, and is thus shaped by the smooth vMF likelihood.

\section{Numerical Experiment Details}
\label{ec:numerical}

\subsection{Models}

In our experiments, we assume $h=n$. Let $\bx \in \R^n$ be the decision vector. Let $\bA \in \R^{m \times n}$, $\bb \in \R^m$, $\bQ  \in \R^{n\times n}$ be a positive definite matrix, and $\beee$ be the vector of all ones. The two models we consider are
\begin{equation*} 
\begin{aligned}
         \mathbf{LP}: \quad \underset{\bx}{\text{minimize}} \quad & \btheta^{\tpose} \bx \nonumber \\
        \text{subject to} \quad & \bA\bx \ge \bb, \\ & -\beee\leq \bx \leq \beee
\end{aligned}
\end{equation*} 
and 
\begin{equation*} 
\begin{aligned}
       \mathbf{QP}: \quad \underset{\bx}{\text{minimize}}\quad & \btheta^{\tpose} \bx \nonumber \\
        \text{subject to}  \quad & \bx^{\tpose} \bQ \bx +\beee^{\tpose}\bx \leq 1.
\end{aligned}
\end{equation*} 
The box constraints are included to ensure the LP has a finite optimum.

For the LP, we set $(m,n) = (75,20)$. 
The rows of $ \bA \in \mathbb{R}^{m \times n} $ are generated by sampling $ \tilde{\ba}_j \sim \mathcal{N}(0, \mathbf{I}_n) $, normalizing, and scaling by independent factors $ s_j \sim \mathrm{Unif}(0.7, 1.4) $, i.e.,
\[
\ba_j := s_j \frac{\tilde{\ba}_j}{\|\tilde{\ba}_j\|_2}, \quad j = 1, \dots, m.
\]
We then sample $ \bw \sim \mathrm{Unif}([-0.72, 0.72]^n)$ and define
$
\bb = \bA \bw - \boldsymbol{\delta},
$
where $\boldsymbol{\delta} \sim \mathrm{Unif}([0.04, 0.28]^m)$. This ensures strict feasibility of $\bw$ for the constraints $\bA\bx \ge \bb$. The parameter ranges are chosen to yield well-conditioned instances; we verified that the results are robust to moderate variations in these ranges.

For the QP, we set $n = 20$ and generate matrices $\bQ$ with eigenvalues uniformly distributed between 1 and 5.

\subsection{Datasets}

For each formulation, a dataset $\mathcal{D} = \{(\bu^i, \by^i)\}_{i=1}^N$ was generated comprising $N$ instances. For instance $i$, the covariate $\bu^i$ consists of $(\bA^i, \bb^i)$ for the LP and $\bQ^i$ for the QP.
We set the true parameter vector to $\bthetastar = \beee/\sqrt{n}$.
To generate $\by^i$ in the decision error model, we solve $N$ instances of the forward problem to obtain optimal solutions $\bx^\star(\bu^i,\btheta^*)$, $i = 1, \ldots, N$, and generate observations as
\[
\by^i = \bx^\star(\bu^i,\btheta^*) + \bepsilon_i,
\]
where $\bepsilon_i \sim \mathcal{N}(0,\sigma^2 \mathbf{I})$ are independent across instances. For the objective error model, we draw perturbed parameter vectors independently from a von Mises--Fisher distribution on $\mathcal{S}^{n-1}$ with mean direction $\bthetastar$ and concentration parameter $\kappa$, and solve the forward problems to optimality to obtain $\by^i$, $i = 1, \ldots, N$.

\subsection{Solving the Inverse Optimization Model in Algorithm~\ref{algo:objective}}
\label{sec:io_solves}

For the LP, we determine active constraints at the observed $(\bu^i,\by^i)$ and choose $\tilde{\btheta}^i$ as a normalized vector in the corresponding normal cone.
For the QP, we set $\tilde{\btheta}^i = (-2 \bQ^i \by^i - \beee)/(\|-2 \bQ^i \by^i - \beee\|_2)$, which is the normalized vector orthogonal to the feasible region defined by $(\bu^i, \by^i)$.

\subsection{Proposal Distribution}
\label{sec:proposal}

To generate $\btheta^c$, we sample $\bz \sim \mathcal{N}(\btheta^{t-1}, \mathbf{S})$ and project to the hypersphere via $\btheta^c \leftarrow \bz/\|\bz\|_2$. Because the Gaussian distribution has support on $\mathbb{R}^n$ rather than $\mathcal{S}^{n-1}$, this approach induces a proposal density that is not exactly symmetric, so the acceptance probability should include the proposal ratio $q(\btheta^{t-1} \mid \btheta^c) / q(\btheta^c \mid \btheta^{t-1})$. In our implementation, however, we adopt a symmetric approximation for numerical efficiency. This is justified because when the largest eigenvalue of $\mathbf{S}$ is small (i.e., $\ll 1$), proposals remain in a small neighborhood of $\btheta^{t-1}$ on $\mathcal{S}^{n-1}$ and the sphere is approximately flat locally, so the proposal ratio is close to 1 and the resulting bias is negligible. Under posterior consistency (established in Theorem~\ref{th:identification}), the posterior concentrates around $\btheta^*$ as $N \to \infty$, implying that well-calibrated proposals will have small step sizes. The near-nominal empirical coverage observed in our numerical experiments supports the validity of this approximation. 

To improve sampling efficiency, we adopt the adaptive Metropolis scheme of \citet{haario2001adaptive}, which updates the proposal covariance matrix $\mathbf{S}$ during the warm-up phase to target an acceptance rate between 25\% and 40\%.

We experimented with two alternative symmetric proposal distributions: (i) a Gaussian distribution with mean $\btheta^{t-1}$ and isotropic covariance $\sigma^2_p \mathbf{I}$, with $\sigma^2_p$ adapted during warm-up, and (ii) a vMF distribution with mean direction $\btheta^{t-1}$ and concentration parameter $\kappa_p$, also adapted during warm-up. We compared these to the Gaussian proposal with the full covariance matrix $\mathbf{S}$ across LP and QP instances of dimension $n$ ranging from 2 to 20. In low dimensions ($n \le 5$), all three proposals yielded comparable posterior summaries ($\alpha_\text{RMS}$ and coverage) and similar sampling efficiency. In higher dimensions, the full-covariance Gaussian proposal substantially outperformed both alternatives in efficiency, requiring substantially fewer iterations (by an order of magnitude), to reach comparable PSRF values. This reflects its ability to adapt to the shape of the posterior through $n(n+1)/2$ covariance parameters, rather than the single scalar of the isotropic Gaussian or vMF proposals.

\section{Code and Data}
Code and data can be found at \url{https://github.com/nyousefi2020/UQ_in_IO}.
All experiments were run on a MacBook Pro (2021) with an Apple M1 Max processor and 64 GB of memory. Optimization problems were solved using Gurobi \citep{gurobi}.







\end{document}